\newcommand{\avg}[1]{\langle{#1}\rangle}
\renewcommand\r{\mathbb{R}}
\newcommand\1{{\bf 1}}
\newtheorem{theorem}{Theorem}[section]
\newtheorem{lemma}[theorem]{Lemma}
\newtheorem{proposition}[theorem]{Proposition}
\newenvironment{proof}[1][Proof]{\begin{trivlist}
\item[\hskip \labelsep {\bfseries #1}] \it }{$\blacksquare$\end{trivlist}}
\newtheorem{remark}[theorem]{Remark}
\definecolor{red}{rgb}{1,0,0}
\definecolor{blue}{rgb}{0,0,1}
\begin{document}

\title{Layer-averaged Euler and Navier-Stokes equations}

\author{M.-O.~Bristeau, C.~Guichard, B.~Di~Martino, J.~Sainte-Marie}
\date{\today}

\maketitle
\begin{abstract}
In this paper we propose a strategy to approximate incompressible hydrostatic free surface Euler
and Navier-Stokes models. The main advantage of the proposed models is
that the water depth is a dynamical variable of the system and hence
the model is formulated over a fixed domain.

The proposed strategy extends previous works approximating the Euler
and Navier-Stokes systems using a multilayer description. Here, the
needed closure relations are obtained using an energy-based optimality
criterion instead of an asymptotic expansion. Moreover, the
layer-averaged description is successfully applied to the
Navier-Stokes system with a general form of the Cauchy stress tensor.
\end{abstract}

{\it Keywords~:} Incompressible Navier-Stokes equations,
incompressible Euler equations, free surface flows, newtonian fluids,
complex rheology


\tableofcontents

\section{Introduction}

Due to computational issues associated with the free surface Navier-Stokes or
Euler equations, the simulations of geophysical flows are often carried
out with shallow water type
models of reduced complexity. Indeed, for vertically
averaged models such as the Saint-Venant system \cite{saint-venant},
efficient and robust numerical techniques
(relaxation schemes \cite{bouchut}, kinetic schemes
\cite{perthame,JSM_entro},\ldots) are available and avoid to deal with moving meshes.

In order to describe and simulate complex flows where the velocity
field cannot be approximated by its vertical mean, multilayer models
have been developed~\cite{audusse,bristeau3,bristeau2,bouchut1,pares,pares1}.
Unfortunately these models are physically relevant for non miscible fluids.

In~\cite{nieto_chacon,JSM_M2AN,JSM_JCP,JSM_M3AS}, some authors have proposed
a simpler and more general formulation for multilayer model with mass
exchanges between the layers. The obtained model has the form of a
conservation law with source terms, its hyperbolicity remains an
open question. Notice that in~\cite{JSM_JCP} the hydrostatic Navier-Stokes
equations with variable density is tackled and in~\cite{JSM_M3AS} the
approximation of the non-hydrostatic terms in the multilayer context
is studied. With respect to commonly used Navier--Stokes solvers,
 the appealing features of the proposed multilayer approach 
  are the easy handling of the free surface,
 which does not require moving meshes (e.g.~\cite{ale_telemac}), and the possibility to
 take advantage of robust and  accurate numerical techniques
 developed in extensive amount for classical one-layer Saint Venant equations. 
Recently, the multilayer model developed in  \cite{nieto_chacon}
has been adapted in  \cite{nieto_diaz_mangeney_reina} 
in the case of the $\mu$(I)-rheology through an asymptotic analysis.

The objective of the paper is twofold. First we want to present another derivation of the
models proposed in~\cite{JSM_M2AN,JSM_JCP,JSM_M3AS}, no more
based on an asymptotic expansion but on an energy-based optimality
criterion. Such a strategy is widely used in the kinetic framework to obtain
kinetic descriptions e.g. of conservations
laws~\cite{levermore,perthame}. Second, we intend
to obtain a multilayer formulation of the Navier-Stokes system with a
rheology more complex than the one arising when considering newtonian fluids.

The paper is organized as follows. In Section~\ref{sec:NS} we recall
the incompressible hydrostatic Navier-Stokes equations with free surface with the
associated boundary conditions. In Section~\ref{sec:av_euler} we
detail the layer averaging process for the Euler system and obtained
the required closure relations. The proposed layer-averaged Euler
system is given in Section~\ref{sec:euler_mc} and its extension to the
Navier-Stokes system with a general rheology is presented in Section~\ref{sec:av_NS}.

\section{The Navier-Stokes system}
\label{sec:NS}

We consider the two-dimensional hydrostatic Navier-Stokes system \cite{lions}
describing a free surface gravitational flow moving over a bottom
topography $z_b(x)$. For free surface flows, the hydrostatic assumption consists in
neglecting the vertical acceleration, see \cite{brenier,grenier,masmoudi} for justifications
of the obtained models.

\subsection{The hydrostatic Navier-Stokes system}

We denote with $x$ and $z$ the horizontal and vertical directions, respectively.  
The system has the form:
\begin{eqnarray}
 \frac{\partial  u}{\partial x}+\frac{\partial w}{\partial z} & = & 0,\label{eq:NS_2d1}\\
\frac{\partial u}{\partial t } + \frac{\partial u^2}{\partial x } +\frac{\partial uw }{\partial z }+ 
\frac{\partial p}{\partial x }  & = & \frac{\partial \Sigma_{xx}}{\partial x} + \frac{\partial \Sigma_{xz}}{\partial z},\label{eq:NS_2d2}\\
\frac{\partial p}{\partial z}  & = & -g + \frac{\partial \Sigma_{zx}}{\partial x} + \frac{\partial \Sigma_{zz}}{\partial z},
\label{eq:NS_2d3}
\end{eqnarray}
and we consider solutions of the equations  for
$$t>t_0, \quad x \in \r, \quad z_b(x) \leq z \leq \eta(x,t),$$
where $\eta(x,t)$ represents the free surface elevation, ${\bf u}=(u,w)^T$ the velocity
vector, $p$ the fluid pressure and $g$ the gravity acceleration. The water depth
is $H = \eta - z_b$, see Fig.~\ref{fig:notations}. The Cauchy
stress tensor $\Sigma_T$  is defined by $\Sigma_T = - p I_d +\Sigma$ with
$$\Sigma = \left(\begin{array}{cc} \Sigma_{xx} & \Sigma_{xz}\\ 
\Sigma_{zx} & \Sigma_{zz}\end{array}\right),$$
and $\Sigma$ represents the fluid rheology.
\begin{figure}
\begin{center}
\includegraphics[height=5cm]{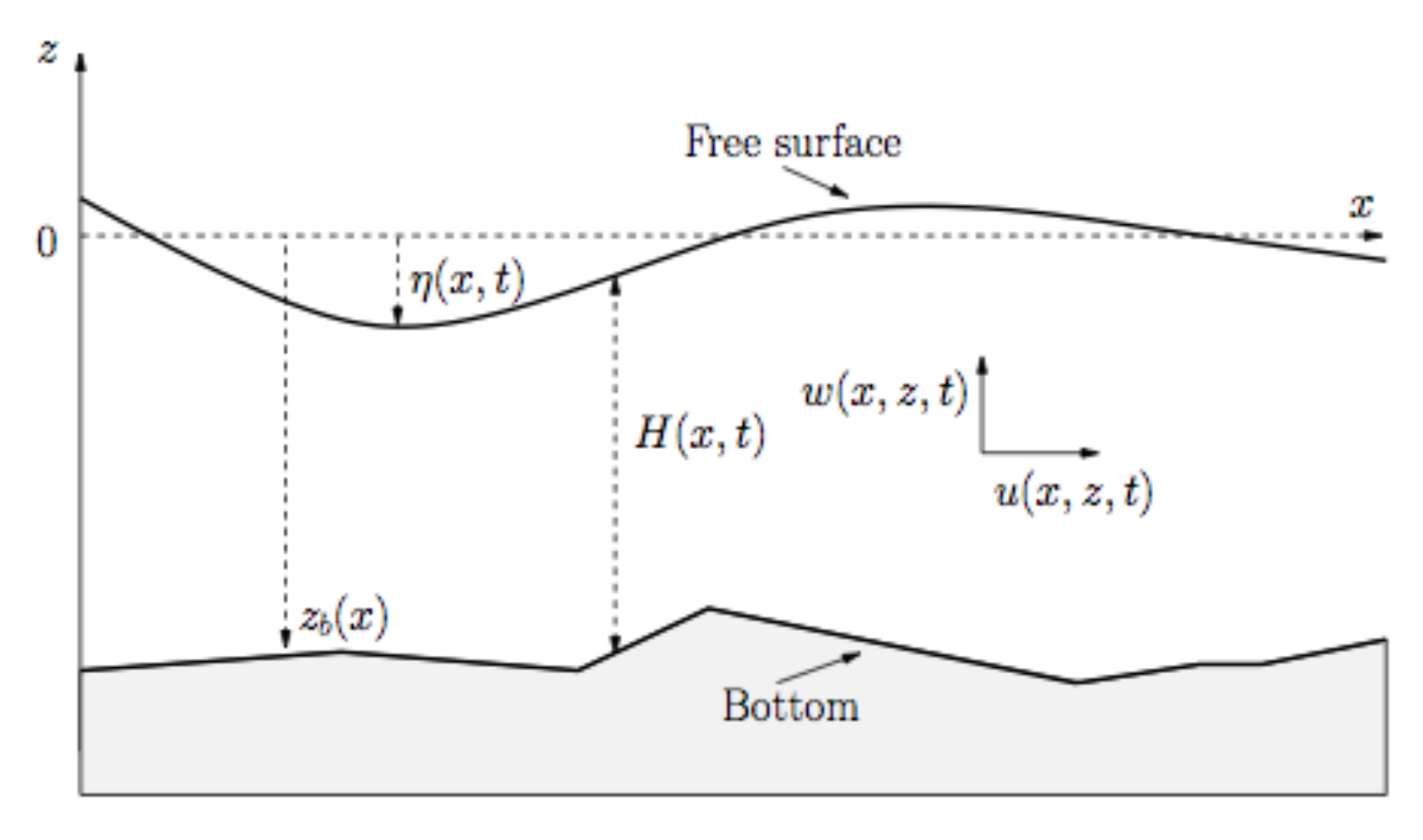}
\caption{Flow domain with water height $H(x,t)$, free surface $\eta(x,t)$ and bottom $z_b(x)$.}
\label{fig:notations}
\end{center}
\end{figure}

As in Ref.~\cite{gerbeau}, we introduce the indicator function for the fluid
region
\begin{equation}
\varphi(x,z,t) = \left\{\begin{array}{ll}
1 & \hbox{ for } (x,z) \in \Omega = \{(x,z)\,|\, z_b \leq z \leq \eta\},\\
0 & \hbox{ otherwise.}
\end{array}\right.
\label{eq:varphi}
\end{equation}
The fluid region is advected by the flow, which can be expressed, thanks
to the incompressibility condition, by the relation
\begin{equation}
\frac{\partial \varphi}{\partial t} + \frac{\partial \varphi u}{\partial
x} + \frac{\partial \varphi w}{\partial z} = 0.
\label{eq:advec_phi}
\end{equation}
The solution $\varphi$ of this equation takes the values 0 and 1 only
but it needs not be of the form (\ref{eq:varphi}) at all times. The
analysis below is limited to the conditions where this form is
preserved. For a more complete presentation of the Navier-Stokes
system and its closure, the reader can refer to~\cite{lions}.

\begin{remark}
Notice that in the fluid domain,
Eq.~(\ref{eq:advec_phi}) reduces to the divergence free condition whereas
across the upper and lower boundaries it gives the
kinematic boundary conditions defined in the following.
\end{remark}

\subsection{Boundary conditions}

The system (\ref{eq:NS_2d1})-(\ref{eq:NS_2d3}) is completed with boundary conditions. We not consider here lateral boundary conditions that can be usual
usual inflow and outflow boundary conditions.
The outward unit normal vector to the free surface ${\bf n}_s$ and the upward
unit normal vector to the bottom ${\bf n}_b$ are given by
$${\bf n}_s = \frac{1}{\sqrt{1 + \bigl(\frac{\partial \eta}{\partial x}\bigr)^2}}
  \left(\begin{array}{c} -\frac{\partial \eta}{\partial x}\\ 1 \end{array} \right), 
  \quad {\bf n}_b = 
  \frac{1}{\sqrt{1 + \bigl(\frac{\partial z_b}{\partial x}\bigr)^2}} 
   \left(\begin{array}{c} -\frac{\partial z_b}{\partial x}\\ 1 \end{array} \right) \equiv \left(\begin{array}{c} -s_b\\ c_b \end{array} \right),$$
 respectively. We use here the same definition for $s_b(x)$ and $c_b(x)$ as in \cite{bouchut}, $c_b(x)>0$ is the cosine of the angle between
 ${\bf n}_b$ and the vertical.

\subsubsection{Free surface conditions}

At the free surface we have the kinematic boundary condition
\begin{equation}
\frac{\partial \eta}{\partial t} + u_s \frac{\partial \eta}{\partial x}
-w_s = 0,
\label{eq:free_surf} 
\end{equation}
where the subscript $s$ indicates the value of the
considered quantity at the free surface.

Assuming negligible the air viscosity, the continuity
of stresses at the free boundary imposes
\begin{equation}
\Sigma_T {\bf n}_s = -p^a {\bf n}_s,
\label{eq:BC_h}
\end{equation}
where $p^a=p^a(x,t)$ is a given function corresponding to the
atmospheric pressure.
Within this paper, we consider $p^a=0$.

\subsubsection{Bottom conditions}

The kinematic boundary condition at the bottom 
consists in a classical no-penetration condition:
\begin{equation}
{\bf u}_b \cdot {\bf n}_b = 0,\quad 
\mbox{or}\quad u_b \frac{\partial z_b}{\partial x} - w_b = 0.
\label{eq:bottom} 
\end{equation}
For the stresses at the bottom we consider a wall law under the form 
\begin{equation}
\Sigma_T {\bf n}_b - ( {\bf n}_b \cdot \Sigma_T {\bf n}_b) {\bf n}_b = \kappa {\bf u_b} 
\label{eq:BC_z_b}
\end{equation}
and for ${\bf t}_b = ^t\!\!(c_b,s_b)$, using (\ref{eq:bottom}) we have
\begin{equation}
{\bf t}_b \cdot \Sigma_T {\bf n}_b =\frac{\kappa}{c_b}  u_b,
\label{eq:BC_z_b1}
\end{equation}
If $\kappa({\bf u_b},H)$ is constant then
we recover a Navier friction condition as in \cite{gerbeau}. Introducing
a laminar friction $k_l$ and a turbulent friction $k_t$, we use the expression 
$$\kappa({\bf u_b},H) = k_l + k_t H |{\bf u_b}|,$$
corresponding to the boundary condition used in \cite{marche}. Another
form of $\kappa({\bf u_b},H)$ is used in \cite{bouchut}, and for other
wall laws the reader can also refer
to \cite{valentin}. Due to thermo-mechanical considerations, in the
sequel we will suppose $\kappa({\bf u_b},H) \geq 0$, and $\kappa({\bf
u_b},H)$ will be  often simply denoted by $\kappa$.

\subsection{Other writing}

For reasons that will appear
later, we rewrite~\eqref{eq:NS_2d1}-\eqref{eq:NS_2d3} under the form
\begin{eqnarray}
& & \frac{\partial u}{\partial x}+\frac{\partial w}{\partial z} =0,\label{eq:NS_2d1_mod}\\
& & \frac{\partial u}{\partial t } + \frac{\partial u^2}{\partial x }
+\frac{\partial uw }{\partial z }+ g\frac{\partial \eta}{\partial x } =
\frac{\partial \Sigma_{xx}}{\partial x} + \frac{\partial
  \Sigma_{xz}}{\partial z} + \frac{\partial^2 }{\partial
  x^2}\int_z^\eta \Sigma_{zx} dz_1 - \frac{\partial \Sigma_{zz}}{\partial x},\label{eq:NS_2d2_mod}
\end{eqnarray}
where Eq.~\eqref{eq:NS_2d2_mod} has been obtained as
follows. Integrating Eq.~\eqref{eq:NS_2d3} from $z$ to $\eta$ and
taking into account the boundary condition~\eqref{eq:BC_h} gives
\begin{equation}
p = g(\eta - z) - \frac{\partial }{\partial
  x}\int_z^\eta \Sigma_{zx} dz_1 + \Sigma_{zz}.
\label{eq:p_exp}
\end{equation}
Inserting the previous expression for $p$ in Eq.~\eqref{eq:NS_2d2} gives Eq.~\eqref{eq:NS_2d2_mod}.

\subsection{Energy balance}

\begin{lemma}
We recall the fundamental stability property related to the fact that
the hydrostatic Navier-Stokes system admits an energy that can be
written under the form
\begin{multline}
\frac{\partial}{\partial t}\int_{z_b}^{\eta} E\ dz  +
\frac{\partial}{\partial x}\int_{z_b}^{\eta} \Bigl[   u \Bigl( E + g(\eta-z)
  - (\Sigma_{xx} - \Sigma_{zz}) - \frac{\partial}{\partial    x}\int_z^\eta \Sigma_{zx} dz_1 \Bigr) - w\Sigma_{zx} \Bigr]  dz\\
=  - \int_{z_b}^\eta  \Bigl( \frac{\partial u}{\partial x}(\Sigma_{xx}- \Sigma_{zz}) 
+ \frac{\partial u}{\partial z}  \Sigma_{xz}+  \frac{\partial w}{\partial x} \Sigma_{zx}  \Bigr) dz
- \frac{\kappa}{c_b^3} u_b^2,
\label{eq:energy_eq_mod}
\end{multline}
with
\begin{equation}
E = \frac{u^2}{2} + gz.
\label{eq:energy_exp}
\end{equation} 
\label{lem:lemma_energy}
\end{lemma}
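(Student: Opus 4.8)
The plan is to derive the energy balance \eqref{eq:energy_eq_mod} directly from the reformulated system \eqref{eq:NS_2d1_mod}-\eqref{eq:NS_2d2_mod}, exploiting the pressure expression \eqref{eq:p_exp}. First I would multiply the momentum equation \eqref{eq:NS_2d2_mod} by $u$ and rewrite each term as a time or space derivative modulo terms that will produce the right-hand side. The inertial terms $u\partial_t u + u\partial_x u^2 + u\partial_z uw$ combine, using the divergence-free condition \eqref{eq:NS_2d1_mod}, into $\partial_t(u^2/2) + \partial_x(u\,u^2/2) + \partial_z(w\,u^2/2)$; this is the standard kinetic-energy manipulation. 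The gravity term $u\,g\partial_x\eta$ must be massaged into conservative form: I would write $g u\partial_x\eta = \partial_x(g u(\eta-z)) - g(\eta-z)\partial_x u + $ (correction), and then use incompressibility to turn $g(\eta-z)\partial_x u = -g(\eta-z)\partial_z w$ into $-\partial_z(g(\eta-z)w) - g w$ after an integration by parts in $z$; the leftover $-gw$ is precisely $-\partial_t(gz)\cdot 0$... more carefully, $gw = g\frac{dz}{dt}$ along particle paths, so $u\,g\partial_x\eta$ eventually contributes the $\partial_t(gz)$ piece of $E$ together with flux terms. This bookkeeping, tracking how $\partial_t\int_{z_b}^\eta gz\,dz$ and the surface/bottom boundary terms from Leibniz differentiation of the moving-domain integral conspire, is the part requiring the most care.

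Next I would handle the stress terms on the right of \eqref{eq:NS_2d2_mod}. Multiplying by $u$ and integrating over $z\in[z_b,\eta]$, each $u\partial_x(\cdot)$ or $u\partial_z(\cdot)$ is integrated by parts to move the derivative off the stress: $\int u\,\partial_z\Sigma_{xz}\,dz = [u\Sigma_{xz}]_{z_b}^\eta - \int \partial_z u\,\Sigma_{xz}\,dz$, and similarly in $x$, producing the bulk dissipation terms $-\int(\partial_x u(\Sigma_{xx}-\Sigma_{zz}) + \partial_z u\,\Sigma_{xz} + \partial_x w\,\Sigma_{zx})\,dz$ on the right of \eqref{eq:energy_eq_mod} plus flux contributions $u(\Sigma_{xx}-\Sigma_{zz}) + u\partial_x\int_z^\eta\Sigma_{zx}dz_1$ inside the $x$-flux and $w\Sigma_{zx}$ as well. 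The double-integral term $u\,\partial_x^2\int_z^\eta\Sigma_{zx}dz_1$ needs one integration by parts in $x$ to land in the flux as $-u\,\partial_x\int_z^\eta\Sigma_{zx}dz_1$ (hence the sign in \eqref{eq:energy_eq_mod}) plus $\partial_x u\,\partial_x\int_z^\eta\Sigma_{zx}dz_1$; this last piece must be shown to combine with the boundary terms at the free surface and bottom, using that $\Sigma_{zx}$ integrated from $\eta$ vanishes, into something that either cancels or is absorbed.

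The boundary terms are where \eqref{eq:BC_h}, \eqref{eq:free_surf}, \eqref{eq:bottom} and \eqref{eq:BC_z_b1} enter. At the free surface, the kinematic condition \eqref{eq:free_surf} converts $w_s - u_s\partial_x\eta$ into $\partial_t\eta$, so surface flux terms reassemble the time derivative of the moving-domain integral via the Leibniz rule, and the zero-stress condition \eqref{eq:BC_h} kills the remaining surface stress contributions (recall $p^a=0$). At the bottom, the no-penetration condition \eqref{eq:bottom} eliminates the convective bottom flux, and the wall law in the form \eqref{eq:BC_z_b1} produces exactly the dissipation term $-\frac{\kappa}{c_b^3}u_b^2$; here one needs to carefully express the combination $\mathbf{t}_b\cdot\Sigma_T\mathbf{n}_b$ appearing naturally in the boundary integral in terms of $u_b$, which brings in the factor $1/c_b^3$ (one power of $c_b$ from \eqref{eq:BC_z_b1} and two more from the normalizations of $\mathbf{n}_b$ and the arc-length element).

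The main obstacle, as flagged above, is the combined moving-boundary/gravity bookkeeping: correctly applying the Leibniz rule to $\frac{\partial}{\partial t}\int_{z_b}^\eta(\cdot)\,dz$ and $\frac{\partial}{\partial x}\int_{z_b}^\eta(\cdot)\,dz$ so that all surface terms generated by differentiating the limits of integration cancel against the convective and kinematic-boundary-condition terms, leaving exactly the flux written in \eqref{eq:energy_eq_mod}. Everything else is a sequence of integrations by parts; the delicate point is ensuring no spurious boundary term survives. I would organize the proof by first establishing the pointwise identity (multiply \eqref{eq:NS_2d2_mod} by $u$, use \eqref{eq:NS_2d1_mod}), then integrate in $z$ with explicit Leibniz terms, and finally substitute the four boundary conditions one at a time.
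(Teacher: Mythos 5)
Your plan is correct in substance and would reach \eqref{eq:energy_eq_mod}, but it is organized differently from the paper's proof. The paper does not start from the modified equation \eqref{eq:NS_2d2_mod}: it multiplies the original horizontal momentum equation \eqref{eq:NS_2d2} by $u$ \emph{and} the hydrostatic equation \eqref{eq:NS_2d3} by $w$, sums, and uses the kinematic and dynamic boundary conditions \eqref{eq:free_surf}--\eqref{eq:BC_z_b} to obtain an intermediate balance whose flux is $u(E+p)-u\Sigma_{xx}-w\Sigma_{zx}$ and whose dissipation contains the four products $\Sigma_{xx}\partial_x u$, $\Sigma_{xz}\partial_z u$, $\Sigma_{zx}\partial_x w$, $\Sigma_{zz}\partial_z w$; only afterwards does it insert the pressure expression \eqref{eq:p_exp} and use \eqref{eq:NS_2d1} to collapse $\Sigma_{xx}\partial_x u+\Sigma_{zz}\partial_z w$ into $(\Sigma_{xx}-\Sigma_{zz})\partial_x u$, which yields the stated flux. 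You instead eliminate $p$ first and multiply only \eqref{eq:NS_2d2_mod} by $u$. That buys a single starting equation and no intermediate pressure identity, but it relocates the work: besides the gravity/moving-boundary bookkeeping you flag, the decisive step is the leftover cross term $\partial_x u\,\partial_x\int_z^\eta\Sigma_{zx}\,dz_1$ from the double-derivative stress term, which in your sketch is only said to ``cancel or be absorbed.'' In fact this term is exactly where the dissipation $\partial_x w\,\Sigma_{zx}$ and the flux contribution $-w\Sigma_{zx}$ come from (they do not arise from the plain integrations by parts of the stress terms, as your second paragraph suggests): one must use incompressibility $\partial_x u=-\partial_z w$ and integrate by parts in $z$, noting $\partial_z\int_z^\eta\Sigma_{zx}\,dz_1=-\Sigma_{zx}$, after which the generated surface and bottom traces combine with the $u\Sigma_{xz}$ traces and with \eqref{eq:BC_h}, \eqref{eq:bottom}, \eqref{eq:BC_z_b1} to produce $-\kappa u_b^2/c_b^3$ (your counting of the three powers of $c_b$ is right). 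The paper's multiplication of \eqref{eq:NS_2d3} by $w$ generates these $w$-terms automatically, which is precisely the manipulation your single-equation route must reconstruct by hand; once that step is made explicit, your argument closes and is equivalent to the paper's.
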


\begin{proof}[Proof of lemma~\ref{lem:lemma_energy}]

The way the energy balance~\eqref{eq:energy_eq_mod} is obtained is
classical. Considering smooth solutions, first we multiply Eq.~\eqref{eq:NS_2d2} by $u$ and Eq.~\eqref{eq:NS_2d3} by
$w$ then we sum the two obtained equations. After simple manipulations
and using the kinematic and dynamic boundary conditions~\eqref{eq:free_surf}-\eqref{eq:BC_z_b}, we obtain the
relation
\begin{multline*}
\frac{\partial}{\partial t}\int_{z_b}^{\eta} E\ dz  +
\frac{\partial}{\partial x}\int_{z_b}^{\eta} \Bigl[ u\bigl( E + p
  \bigr) - u\Sigma_{xx} -w\Sigma_{zx}\Bigr] dz\\
=  - \int_{z_b}^\eta \Sigma_{xx}\frac{\partial u}{\partial x}  dz
-\int_{z_b}^\eta \Sigma_{xz}\frac{\partial u}{\partial z} dz -
\int_{z_b}^\eta \frac{\partial w}{\partial x} \Sigma_{zx} dz -
\int_{z_b}^\eta \Sigma_{zz}\frac{\partial w}{\partial z}  dz - \frac{\kappa}{c_b^3}
u_b^2.
\end{multline*}
By using Eq.~\eqref{eq:NS_2d1} and replacing $p$ by its expression given by~\eqref{eq:p_exp} in the previous
relation gives the result.

\end{proof}

\section{Depth-averaged solutions of the Euler system}
\label{sec:av_euler}

In this section, neglecting the viscous effects in Eqs.~\eqref{eq:NS_2d1}-\eqref{eq:NS_2d3}, we consider the
free surface hydrostatic Euler
equations written in a conservative form
\begin{eqnarray}
& & \frac{\partial \varphi}{\partial t}+\frac{\partial \varphi u}{\partial x}+\frac{\partial \varphi w}{\partial z} =0,\label{eq:euler_2d1}\\
& & \frac{\partial \varphi u}{\partial t } + \frac{\partial \varphi u^2}{\partial x } +\frac{\partial \varphi uw }{\partial z }+ \frac{\partial p}{\partial x } = 0,\label{eq:euler_2d2}\\
& & \hspace*{3cm} \frac{\partial p}{\partial z} = -
\varphi g,\label{eq:euler_2d3}
\end{eqnarray}
with $\varphi$ defined by~\eqref{eq:varphi}. This system is completed
with the boundary conditions~\eqref{eq:free_surf},\eqref{eq:bottom}
and~\eqref{eq:BC_h} that reduces to
\begin{equation}
p_s = 0.
\label{eq:ps}
\end{equation}
From Eqs.~\eqref{eq:euler_2d3},\eqref{eq:ps}, we get
\begin{equation}
p = \varphi g (\eta - z).
\label{eq:pres}
\end{equation}

The energy balance associated with the
hydrostatic Euler system is given by
\begin{equation}
\frac{\partial}{\partial t}\int_{z_b}^{\eta} E\ dz +
\frac{\partial}{\partial x}\int_{z_b}^{\eta} u\bigl( E + p
  \bigr) dz = 0,
\label{eq:energy_euler_eq}
\end{equation}
with $E$ defined by~\eqref{eq:energy_exp}.

\subsection{Vertical discretization of the fluid domain}

The interval $[z_b,\eta]$ is
divided into $N$ layers $\{L_\alpha\}_{\alpha\in\{1,\ldots,N\}}$ of
thickness $l_\alpha H(x,t)$ where each layer
$L_\alpha$ corresponds to the points satisfying $z \in L_\alpha(x,t) = ]z_{\alpha-1/2},z_{\alpha+1/2}[$
with
\begin{equation}
\left\{\begin{array}{l}
z_{\alpha+1/2}(x,t) = z_b(x) + \sum_{j=1}^\alpha l_j H(x,t),\\
h_\alpha(x,t) = z_{\alpha+1/2}(x,t) - z_{\alpha-1/2}(x,t)=l_\alpha H(x,t),
 \quad \alpha\in \{1,\ldots,N\},
\end{array}\right.
\label{eq:layer}
\end{equation}
 with $l_j>0, \quad \sum_{j=1}^N l_j=1$, see
Fig.~\ref{fig:notations_mc}.

We also define
\begin{equation}
z_\alpha = \frac{z_{\alpha+1/2} + z_{\alpha-1/2}}{2} =
z_{\alpha-1/2} + \frac{h_\alpha}{2}, \quad \alpha=\{1,\ldots,N\}.
\label{eq:zalpha}
\end{equation}

We finally introduced the distance between the midpoints of the layers,
\begin{equation}
h_{\alpha+1/2} = z_{\alpha+1} - z_{\alpha} = \frac{ h_{\alpha+1} + h_\alpha }{2}, \quad \alpha=\{1,\ldots,N-1\}.
\label{eq:halphademi}
\end{equation}

\begin{figure}[hbtp]
\begin{center}
\includegraphics[height=6cm]{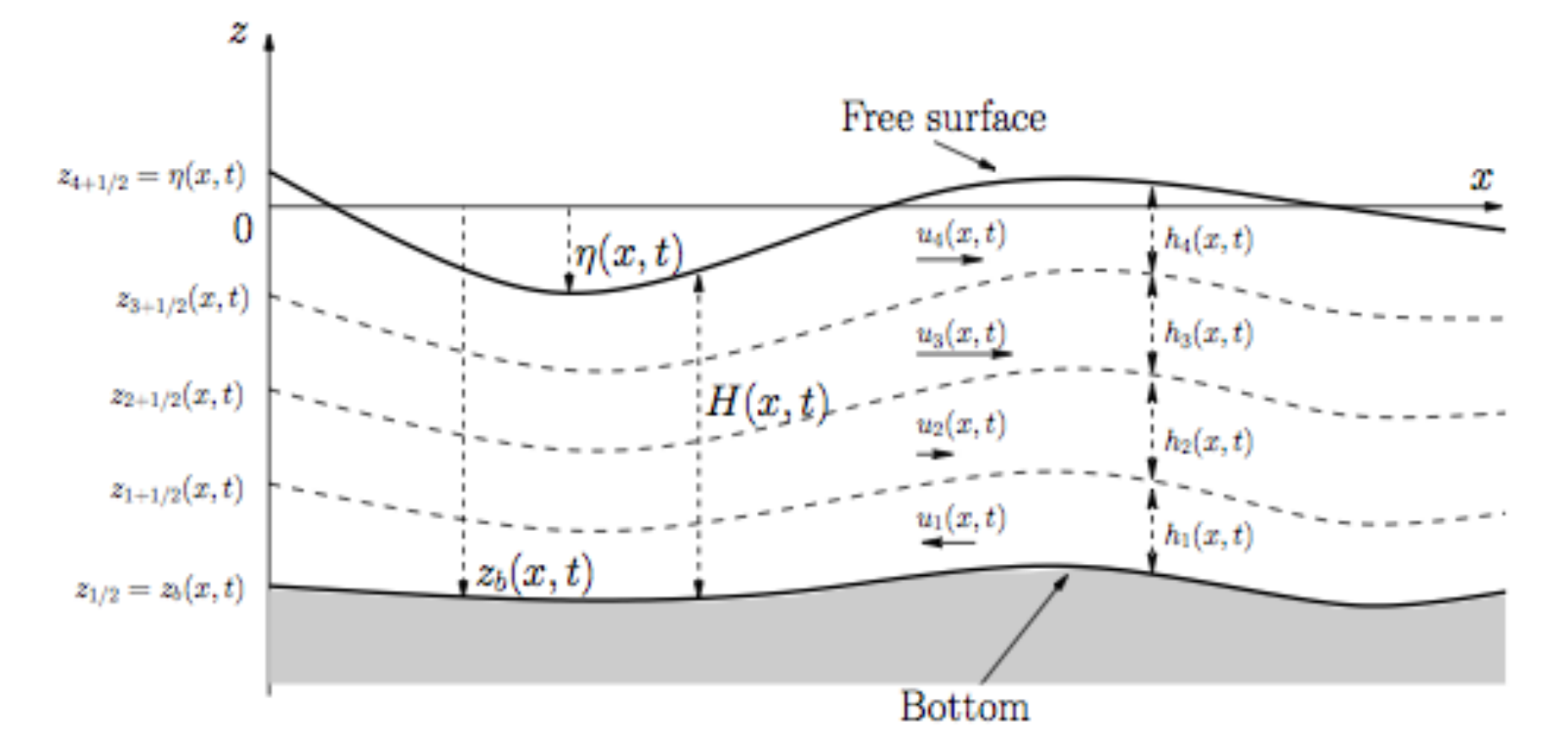}
\caption{Notations for the multilayer approach.}
\label{fig:notations_mc}
\end{center}
\end{figure}

\subsection{Layer-averaging of the Euler solution}
\label{subsec:depth-averaging}

In this section we take the vertical average of the Euler system and
study the necessary closure relations for this system.

Let us denote $\avg{f}_\alpha$ the integral along the vertical axis in
the layer $\alpha$ of the quantity $f=f(z)$
i.e.
\begin{equation}
\avg{f}_\alpha(x,t) = \int_\r f(x,z,t)
\1_{z \in L_\alpha(x,t)} dz,
\label{eq:mean1}
\end{equation}
where $\1_{z \in L_\alpha(x,t)}(z)$ is the characteristic
function of the layer $\alpha$.

The goal is to propose a new derivation of the so-called {\it multilayer model
with mass exchanges} \cite{JSM_M2AN,JSM_JCP} using the
entropy-based moment closures proposed by Levermore in
\cite{levermore_closure} for kinetic equations. This method has
already been successfully used by some of the authors in~\cite{JSM_nhyd}.

Taking into account the kinematic boundary
conditions~\eqref{eq:free_surf} and~\eqref{eq:bottom}, the layer-averaged form of the Euler system~\eqref{eq:euler_2d1}--\eqref{eq:euler_2d3}
writes
\begin{align}
  &\frac{\partial}{\partial t} \avg{\varphi}_\alpha +
  \frac{\partial}{\partial x} \avg{\varphi u}_\alpha = G_{\alpha+1/2}
  - G_{\alpha-1/2} , \label{eq:av1}\\
  &\frac{\partial}{\partial t} \avg{\varphi u}_\alpha +
  \frac{\partial}{\partial x} \avg{\varphi u^2}_\alpha + \avg{
    \frac{\partial p}{\partial x} }_\alpha = u_{\alpha+1/2} G_{\alpha+1/2}
  - u_{\alpha-1/2} G_{\alpha-1/2} , \label{eq:av2}\\
  &\avg{
    \frac{\partial p}{\partial z}}_\alpha = -\avg{\varphi
    g}_\alpha,\label{eq:av3}\\
&\frac{\partial}{\partial t} \avg{\varphi z}_\alpha + \frac{\partial}{\partial
  x} \avg{\varphi z u}_\alpha = \avg{\varphi w}_\alpha + z_{\alpha+1/2}
G_{\alpha+1/2} - z_{\alpha-1/2} G_{\alpha-1/2},\label{eq:av4}
\end{align}
for $\alpha\in\{1,\ldots,N\}$ and where $p$ is defined by Eq.~\eqref{eq:pres}. The quantity $G_{\alpha+1/2}$ is
defined by
\begin{equation}
G_{\alpha+1/2} = \varphi_{\alpha+1/2} \left(
\frac{\partial  z_{\alpha+1/2}}{\partial t}
+ u_{\alpha+1/2} \frac{\partial  z_{\alpha+1/2}}{\partial x} 
- w_{\alpha+1/2}\right),\label{eq:Qalpha}
\end{equation}
and corresponds to the mass flux leaving/entering the layer $\alpha$ through
the interface $z_{\alpha+1/2}$. The value of $\varphi_{\alpha+1/2}$ is equal to 1 for every $\alpha$. Notice that the kinematic boundary conditions 
(\ref{eq:free_surf}) and (\ref{eq:bottom}) can be written
\begin{equation}
 G_{1/2} = 0,\quad G_{N+1/2}=0.
\label{eq:Qlim}
\end{equation}
These equations just express that there is no loss/supply of mass
through the bottom and the free surface. Taking into account the
condition~\eqref{eq:Qlim}, the sum for $j=1,\ldots\alpha$ of the
relations~\eqref{eq:av1} gives
\begin{equation}
G_{\alpha+1/2} = \frac{\partial }{\partial t }\sum_{j=1}^\alpha \avg{\varphi}_j
+  \frac{\partial }{\partial x }\sum_{j=1}^\alpha \avg{\varphi u}_j.
\label{eq:Qalphater}
\end{equation}
The quantities
\begin{equation}
u_{\alpha+1/2}  = u(x,z_{\alpha+1/2},t),
\label{eq:u_int}
\end{equation}
corresponding to the velocities values on the interfaces will be
defined later. Notice that when using the
expression~\eqref{eq:Qalphater}, the velocities $w_{\alpha+1/2}$ no
more appear in Eqs.~\eqref{eq:av1}-\eqref{eq:av4} and thus need not be defined.

Equation~\eqref{eq:av4} is a rewriting of
$$\avg{ \int_{z_{\alpha-1/2}}^z \left(\frac{\partial {\varphi}}{\partial {t}} + \frac{\partial {\varphi
    u}}{\partial {x}} + \frac{\partial {\varphi w}}{\partial
  {z}}\right) dz}_\alpha = \avg{z \left( \frac{\partial {\varphi}}{\partial {t}} + \frac{\partial {\varphi
    u}}{\partial {x}} + \frac{\partial {\varphi w}}{\partial {z}}\right)}_\alpha
 =0,$$
using again the kinematic boundary conditions. Notice also that
because of the hydrostatic assumption, Eq.~\eqref{eq:av4} is not a
kinematic constraint over the velocity field but the definition of the
vertical velocity $\avg{\varphi w}_\alpha$. The form of
Eq.~\eqref{eq:av4} is useful to derive energy balances but other
equivalent writings can be used, see paragraph~\ref{subsec:cal_w}.

Simple manipulations allow to obtain the
system~\eqref{eq:av1}-\eqref{eq:Qalpha} from the Euler
system~\eqref{eq:euler_2d1}-\eqref{eq:euler_2d3} with~\eqref{eq:free_surf} and~\eqref{eq:bottom} e.g. for Eq.~\eqref{eq:av1},
starting from~\eqref{eq:euler_2d1} we write
$$\avg{\frac{\partial \varphi}{\partial t} + \frac{\partial \varphi u}{\partial
    x} + \frac{\partial \varphi w}{\partial z}}_\alpha = 0,$$
and using the Leibniz rule to permute the derivative and the integral 
directly gives~\eqref{eq:av1}. Likewise, the Leibniz rule written for the pressure $p$ gives
$$\avg{\frac{\partial p}{\partial x} }_\alpha =
\int_{z_{\alpha-1/2}}^{z_{\alpha+1/2}} \frac{\partial p}{\partial x}
dz = \frac{\partial }{\partial x}\avg{p}_\alpha -
p_{\alpha+1/2}\frac{\partial z_{\alpha+1/2}}{\partial x} + p_{\alpha-1/2}\frac{\partial z_{\alpha-1/2}}{\partial x},$$
and from~\eqref{eq:av3},\eqref{eq:ps}, we get
\begin{equation}
p_{\alpha+1/2} = p(x,z_{\alpha+1/2},t) = \sum_{j=\alpha+1}^N \avg{\varphi g}_j.
\label{eq:p_mid}
\end{equation}
From Eq.~\eqref{eq:pres}, we also have
$$\avg{\frac{\partial p}{\partial x} }_\alpha =
\int_{z_{\alpha-1/2}}^{z_{\alpha+1/2}} g \frac{\partial
}{\partial x} \bigl(\varphi (\eta - z)\bigr)
dz = \frac{\partial}{\partial x}\Bigl( \frac{g}{2}\avg{\varphi}_\alpha
H\Bigr) + g \avg{\varphi}_\alpha\frac{\partial z_b}{\partial x}.$$
Relation~\eqref{eq:pres} also leads to
\begin{equation*}
p =p_{\alpha+1/2} + g \varphi(z_{\alpha+1/2}-z)=p_{\alpha-1/2} + g \varphi(z_{\alpha-1/2}-z),
\label{eq:pres1}
\end{equation*}
and hence
\begin{equation}
\avg{p}_\alpha = \avg{\varphi}_\alpha\frac{p_{\alpha+1/2} +
  p_{\alpha-1/2}}{2}=\avg{\varphi}_\alpha p_{\alpha+1/2} + \frac{g}{2}\avg{\varphi}_\alpha^2.
\label{eq:p_moy}
\end{equation}
Therefore, the system~\eqref{eq:av1}-\eqref{eq:Qalpha} can be rewritten under the form
\begin{align}
  &\frac{\partial}{\partial t} \avg{\varphi}_\alpha +
  \frac{\partial}{\partial x} \avg{\varphi u}_\alpha = G_{\alpha+1/2}
  - G_{\alpha-1/2} , \label{eq:av_h}\\
  &\frac{\partial}{\partial t} \avg{\varphi u}_\alpha +
  \frac{\partial}{\partial x} \left( \avg{\varphi u^2}_\alpha +
    \avg{p}_\alpha \right) = u_{\alpha+1/2} G_{\alpha+1/2}
  - u_{\alpha-1/2} G_{\alpha-1/2} \nonumber\\
& \hspace*{6cm} + p_{\alpha+1/2}\frac{\partial z_{\alpha+1/2}}{\partial x} - p_{\alpha-1/2}\frac{\partial z_{\alpha-1/2}}{\partial x}, \label{eq:av_u}\\
&\frac{\partial}{\partial t} \avg{\varphi z}_\alpha + \frac{\partial}{\partial
  x} \avg{\varphi z u}_\alpha = \avg{\varphi w}_\alpha + z_{\alpha+1/2}
G_{\alpha+1/2} - z_{\alpha-1/2} G_{\alpha-1/2},\label{eq:av_w}
\end{align}
with~\eqref{eq:p_mid},\eqref{eq:p_moy} and
completed with relations~\eqref{eq:Qalphater}.

Considering smooth solutions, multiplying~\eqref{eq:euler_2d2} by $u$ and integrating it over the
layer $\alpha$ gives, after simple manipulations, the energy balance
\begin{multline}
\frac{\partial}{\partial t} \avg{ E}_\alpha +
  \frac{\partial}{\partial x} \avg{ u(E+p)}_\alpha =  
\left( \frac{u_{\alpha+1/2}^2}{2} + p_{\alpha+1/2} + g z_{\alpha+1/2}\right) G_{\alpha+1/2}\\
-\left( \frac{u_{\alpha-1/2}^2}{2} + p_{\alpha-1/2} + g
  z_{\alpha-1/2}\right) G_{\alpha-1/2}  - p_{\alpha+1/2}\frac{\partial z_{\alpha+1/2}}{\partial t} +
p_{\alpha-1/2}\frac{\partial z_{\alpha-1/2}}{\partial t},
\label{eq:energy_av}
\end{multline}
where $E=E(z;u)$ is defined by~\eqref{eq:energy_exp}. The sum for
$\alpha=1,\ldots,N$ of the relations~\eqref{eq:energy_av} gives
$$\frac{\partial}{\partial t} \sum_{\alpha=1}^N \avg{ E}_\alpha+
  \frac{\partial}{\partial x} \sum_{\alpha=1}^N \avg{ u(E+p)}_\alpha = 0.$$

Therefore the
system~\eqref{eq:av_h}-\eqref{eq:av_w} completed~\eqref{eq:Qalphater},~\eqref{eq:p_mid} and~\eqref{eq:p_moy} has three equations with three
unknowns, namely $\avg{\varphi}_\alpha$, $\avg{\varphi u}_\alpha$ and~$\avg{\varphi w}_\alpha$
and closure relations are needed to define $\avg{\varphi u^2}_\alpha$,
$\avg{\varphi z u}_\alpha$ and $u(x,z_{\alpha+1/2},t)$.

\subsection{Closure relations}

If $u^\prime_\alpha$ is defined as the
deviation of $u$ with respect to its layer-average over the
layer $\alpha$, then it comes for $z\in L_\alpha$
\begin{equation}
  \label{eq:uaverage}
 \varphi u = \frac{\avg{\varphi u}_\alpha}{\avg{\varphi}_\alpha} + \varphi u^\prime_\alpha,
\end{equation}
with $\avg{\varphi u^\prime_\alpha} = 0$. Following the moment closure proposed by Levermore~\cite{levermore_closure}, we
study the minimization problem
\begin{equation}
  \label{eq:minSW}
\min_{u^\prime_\alpha} \avg{ \{ \varphi E(z;u)  \}}_\alpha.
\end{equation}
The energy $E(z;u) $ being quadratic with respect
to $u$ we notice that
\begin{eqnarray}
    \avg{\varphi u^2}_\alpha & = & \frac{\avg{\varphi u}^2_\alpha}{\avg{\varphi}_\alpha} +
    \frac{2 \avg{\varphi u u^\prime}_\alpha}{\avg{\varphi}_\alpha} +
    \avg{\varphi (u^\prime_\alpha)^2}_\alpha \nonumber\\
    &= & \frac{\avg{\varphi  u}^2_\alpha}{\avg{\varphi}_\alpha} +
    \avg{\varphi  (u^\prime_\alpha)^2}_\alpha\nonumber\\
    &\geq & \frac{\avg{\varphi  u}^2_\alpha}{\avg{\varphi}_\alpha}.
\label{eq:uaverage2}
\end{eqnarray}
Equation~\eqref{eq:uaverage2} means that the solution
of the minimization problem~\eqref{eq:minSW} is given by
\begin{equation}
  \label{eq:minSW1}
\avg{\varphi  E\left(z;\frac{\avg{\varphi  u}_\alpha}{\avg{\varphi }_\alpha}\right)}_\alpha = \min_{u^\prime_\alpha} \avg{ \{ \varphi  E(z;u)  \}}_\alpha.
\end{equation}
and
\begin{equation}
\avg{\varphi  E\left(z;\frac{\avg{\varphi  u}_\alpha}{\avg{\varphi
      }_\alpha}\right)}_\alpha = \frac{\avg{\varphi
    u}^2_\alpha}{2\avg{\varphi}_\alpha} + g\avg{\varphi z}_\alpha.
\label{eq:minSW11}
\end{equation}
Since the only choice leading to an equality in
relation~\eqref{eq:uaverage2} corresponds to
\begin{equation}
\varphi u = \frac{\avg{\varphi
    u}_\alpha}{\avg{\varphi}_\alpha},\quad\mbox{for } z\in L_\alpha,
\label{eq:closures}
\end{equation}
this allows to precise the closure relation associated to a minimal
energy, namely
\begin{align}
& \avg{\varphi u^2}_\alpha = \frac{\avg{\varphi u}_\alpha^2}{\avg{\varphi}_\alpha},\label{eq:close1}\\
& \avg{\varphi z u}_\alpha = \avg{\varphi z}_\alpha\frac{\avg{\varphi u}_\alpha}{\avg{\varphi}_\alpha}. \label{eq:close3}
\end{align}
It remains to define the quantities $u_{\alpha+1/2}$. We adopt the definition
\begin{equation}
u_{\alpha+1/2} =
\left\{\begin{array}{ll}
\frac{\avg{\varphi u}_\alpha}{\avg{\varphi}_\alpha} & \mbox{if } \;G_{\alpha+1/2} \leq 0\\
\frac{\avg{\varphi u}_{\alpha+1}}{\avg{\varphi}_{\alpha+1}} & \mbox{if } \;G_{\alpha+1/2} > 0
\end{array}\right.
\label{eq:upwind_uT}
\end{equation}
corresponding to an upwind definition, depending on the mass exchange sign between
the layers $\alpha$ and $\alpha+1$. This choice is justified by the
form of energy balance in the following proposition.
\begin{proposition}
The solutions of the
Euler system~\eqref{eq:euler_2d1}-\eqref{eq:euler_2d3} with~\eqref{eq:free_surf},\eqref{eq:bottom} satisfying the
closure relations~\eqref{eq:close1}-\eqref{eq:upwind_uT} are also
solutions of the system
\begin{align}
  &\frac{\partial}{\partial t} \avg{\varphi}_\alpha +
  \frac{\partial}{\partial x} \avg{\varphi u}_\alpha = G_{\alpha+1/2}
  - G_{\alpha-1/2} , \label{eq:av_h1}\\
  &\frac{\partial}{\partial t} \avg{\varphi u}_\alpha +
  \frac{\partial}{\partial x} \left( \frac{\avg{\varphi u}_\alpha^2}{\avg{\varphi}_\alpha} +
    \avg{ p}_\alpha \right) = u_{\alpha+1/2} G_{\alpha+1/2}
  - u_{\alpha-1/2} G_{\alpha-1/2} \nonumber\\
& \hspace*{6cm} + p_{\alpha+1/2}\frac{\partial z_{\alpha+1/2}}{\partial x} - p_{\alpha-1/2}\frac{\partial z_{\alpha-1/2}}{\partial x}, \label{eq:av_u1}\\
&\frac{\partial}{\partial t} \avg{\varphi z}_\alpha + \frac{\partial}{\partial
  x} \left(\avg{\varphi z}_\alpha\frac{\avg{\varphi u}_\alpha}{\avg{\varphi}_\alpha}\right) = \avg{\varphi w}_\alpha + z_{\alpha+1/2}
G_{\alpha+1/2} - z_{\alpha-1/2} G_{\alpha-1/2},\label{eq:av_w1}
\end{align}
completed with relation~\eqref{eq:Qalphater}. The quantities $\avg{p}_\alpha$ and
$p_{\alpha+1/2}$ are defined by~\eqref{eq:p_mid} and~\eqref{eq:p_moy}.

This system is a layer-averaged approximation of the
Euler system and
admits~-- for smooth solutions~-- an energy equality
under the form
\begin{multline}
  \frac{\partial}{\partial t} \sum_{\alpha=1}^N
  \avg{E\left(z;\frac{\avg{ \varphi u}_\alpha}{\avg{\varphi}_\alpha}\right)}_\alpha +
  \frac{\partial}{\partial x} \sum_{\alpha=1}^N
  \avg{\frac{\avg{\varphi u}_\alpha}{\avg{\varphi}_\alpha} \left(
    E\left(z;\frac{\avg{\varphi u}_\alpha}{\avg{\varphi}_\alpha}\right) + \avg{p}_\alpha
  \right)}_\alpha = \\
 - \frac{1}{2} \sum_{\alpha=1}^N \left( \frac{\avg{\varphi u}_{\alpha+1}}{\avg{\varphi}_{\alpha+1}} -
  \frac{\avg{\varphi u}_\alpha}{\avg{\varphi}_\alpha} \right)^2 |G_{\alpha+1/2}|.
\label{eq:energy_av_fin}
\end{multline}
\label{prop:prop_ad}
\end{proposition}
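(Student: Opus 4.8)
The plan is to establish the identity directly from the closed system \eqref{eq:av_h1}--\eqref{eq:av_w1}, re-deriving an energy balance layer by layer rather than quoting \eqref{eq:energy_av}: the latter was obtained from the pointwise momentum equation \eqref{eq:euler_2d2}, but here the closure makes the velocity piecewise constant, so the correct bookkeeping comes from multiplying the \emph{closed} momentum equation \eqref{eq:av_u1} by the layer velocity $u_\alpha:=\avg{\varphi u}_\alpha/\avg{\varphi}_\alpha$ (this is exactly the point that turns energy conservation into energy dissipation). So I would first multiply \eqref{eq:av_u1} by $u_\alpha$, use \eqref{eq:av_h1} to rewrite $\partial_t\bigl(\avg{\varphi u}_\alpha^2/(2\avg{\varphi}_\alpha)\bigr)=u_\alpha\,\partial_t\avg{\varphi u}_\alpha-\tfrac12 u_\alpha^2\,\partial_t\avg{\varphi}_\alpha$, add $g\,\partial_t\avg{\varphi z}_\alpha$ (handled through the explicit geometric identity $\avg{\varphi z}_\alpha=\tfrac12(z_{\alpha+1/2}^2-z_{\alpha-1/2}^2)$), and use the closures \eqref{eq:close1} and the identity $u_\alpha\,\partial_x(u_\alpha\avg{\varphi u}_\alpha)-\tfrac12 u_\alpha^2\,\partial_x\avg{\varphi u}_\alpha=\partial_x\bigl(\tfrac12 u_\alpha^2\avg{\varphi u}_\alpha\bigr)$ to assemble the conservative flux $u_\alpha\bigl(\avg{E(z;u_\alpha)}_\alpha+\avg{p}_\alpha\bigr)$ of \eqref{eq:energy_av_fin}.

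After this reorganization the per-layer balance carries three kinds of residual terms: (i) interface contributions proportional to the mass exchange, namely $\bigl(u_\alpha u_{\alpha+1/2}-\tfrac12 u_\alpha^2\bigr)G_{\alpha+1/2}$ minus the analogue at $z_{\alpha-1/2}$; (ii) the non-conservative pressure source work $u_\alpha\bigl(p_{\alpha+1/2}\,\partial_x z_{\alpha+1/2}-p_{\alpha-1/2}\,\partial_x z_{\alpha-1/2}\bigr)$ coming from the right-hand side of \eqref{eq:av_u1}; (iii) the term $\avg{p}_\alpha\,\partial_x u_\alpha$ produced when the pressure flux is integrated by parts; and, since I need the potential flux $g\,u_\alpha\avg{\varphi z}_\alpha$ to appear inside the flux of \eqref{eq:energy_av_fin}, a compensating term $+g\,\partial_x\!\sum_\alpha u_\alpha\avg{\varphi z}_\alpha$ together with $g\,\partial_t\!\sum_\alpha\avg{\varphi z}_\alpha=g\eta\,\partial_t\eta$ (using $z_{1/2}=z_b$, $z_{N+1/2}=\eta$ and $\partial_t z_b=0$). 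I would then sum over $\alpha=1,\dots,N$; the boundary contributions at the interfaces $1/2$ and $N+1/2$ drop out because $G_{1/2}=G_{N+1/2}=0$, $p_{N+1/2}=0$ and $\partial_t z_b=0$.

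For the interior interfaces, the two layers adjacent to $z_{\alpha+1/2}$ contribute to the type-(i) terms with opposite signs but with $u_\alpha$ and $u_{\alpha+1}$ respectively, so the net contribution is $(u_\alpha-u_{\alpha+1})\bigl(u_{\alpha+1/2}-\tfrac12(u_\alpha+u_{\alpha+1})\bigr)G_{\alpha+1/2}$, and inserting the upwind choice \eqref{eq:upwind_uT} ($u_{\alpha+1/2}=u_\alpha$ if $G_{\alpha+1/2}\le 0$, $=u_{\alpha+1}$ otherwise) turns this into $-\tfrac12(u_{\alpha+1}-u_\alpha)^2\,|G_{\alpha+1/2}|$ in both sign cases --- exactly the right-hand side of \eqref{eq:energy_av_fin}. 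It then remains to check that everything else cancels identically, and this is where the hydrostatic structure enters through two elementary identities following from \eqref{eq:p_mid}--\eqref{eq:p_moy}: first $\avg{p}_\alpha+g\avg{\varphi z}_\alpha=g\eta\,\avg{\varphi}_\alpha$, so that (iii) together with the potential flux reduces to $g\eta\sum_\alpha\avg{\varphi}_\alpha\,\partial_x u_\alpha=g\eta\,\partial_x\!\sum_\alpha\avg{\varphi u}_\alpha-g\eta\sum_\alpha u_\alpha\,\partial_x\avg{\varphi}_\alpha$, whose first term equals $-g\eta\,\partial_t\eta$ by the summed mass equation \eqref{eq:av_h1} (which kills the $g\eta\,\partial_t\eta$ coming from the potential energy); second $g\bigl(\partial_x\avg{\varphi z}_\alpha-\eta\,\partial_x\avg{\varphi}_\alpha\bigr)=-\bigl(p_{\alpha+1/2}\,\partial_x z_{\alpha+1/2}-p_{\alpha-1/2}\,\partial_x z_{\alpha-1/2}\bigr)$, which shows that the remaining $-g\eta\sum_\alpha u_\alpha\,\partial_x\avg{\varphi}_\alpha$ is the exact opposite of the summed type-(ii) work. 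Hence all residuals vanish and \eqref{eq:energy_av_fin} follows.

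The main obstacle is precisely this last bookkeeping: keeping track of the several interface and non-conservative source terms and recognizing the two hydrostatic identities above that make them collapse (the summed mass equation being the extra input that removes $g\eta\,\partial_t\eta$). By contrast, the appearance of the dissipative right-hand side is the easy and robust part, driven solely by the upwind definition \eqref{eq:upwind_uT} applied to the advection--mass interface terms. A minor point to be careful about is the reading of $\avg{\cdot}_\alpha$ when it is applied in \eqref{eq:energy_av_fin} to the already layer-integrated quantity $\avg{p}_\alpha$ (it is not to be integrated a second time), and one should keep in mind that all the above manipulations are performed on smooth solutions, so that the Leibniz rule, the chain rule, and the integrations by parts used to form the fluxes are legitimate.
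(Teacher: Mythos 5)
Your argument is correct, and its engine is the same as the paper's: multiply the closed momentum equation \eqref{eq:av_u1} by $u_\alpha=\avg{\varphi u}_\alpha/\avg{\varphi}_\alpha$, use the mass equation \eqref{eq:av_h1} to assemble the kinetic part, sum over layers, regroup the advection--exchange terms interface by interface, and let the upwind choice \eqref{eq:upwind_uT} produce $-\tfrac12(u_{\alpha+1}-u_\alpha)^2|G_{\alpha+1/2}|$ (this is exactly the paper's passage from \eqref{eq:energy_av_fin_mc} to \eqref{eq:energy_av_fin}). Where you genuinely diverge is the pressure/potential bookkeeping: the paper works per layer, rewriting $I_{p,\alpha}$ with the identity $p_{\alpha+1/2}\partial_x z_{\alpha+1/2}-p_{\alpha-1/2}\partial_x z_{\alpha-1/2}=\frac{\avg{p}_\alpha}{\avg{\varphi}_\alpha}\partial_x h_\alpha-g\avg{\varphi}_\alpha\partial_x z_\alpha$ and the layer mass equation so that a per-layer balance is obtained and the residual interface terms telescope on summation; you instead add the potential energy as an exact derivative (via $\avg{\varphi z}_\alpha=\tfrac12(z_{\alpha+1/2}^2-z_{\alpha-1/2}^2)$, never invoking \eqref{eq:av_w1}) and cancel everything globally through $\avg{p}_\alpha+g\avg{\varphi z}_\alpha=g\eta\avg{\varphi}_\alpha$, the identity $g\partial_x\avg{\varphi z}_\alpha-g\eta\partial_x\avg{\varphi}_\alpha=-(p_{\alpha+1/2}\partial_x z_{\alpha+1/2}-p_{\alpha-1/2}\partial_x z_{\alpha-1/2})$, and the summed mass equation, which removes $g\eta\,\partial_t\eta$. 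I checked that this collapse does occur: combining (ii), (iii) and the potential flux yields $g\eta\sum_\alpha\partial_x(h_\alpha u_\alpha)$, which cancels $g\partial_t\sum_\alpha h_\alpha z_\alpha=g\eta\,\partial_t\eta$. Your route buys a shorter global computation with no interface pressure-work or $p\,\partial_t z$ terms at all (so $p_{N+1/2}=0$ is not even needed), at the price of delivering only the summed identity, whereas the paper's per-layer balance also recovers \eqref{eq:energy_av} under the closure and feeds remark~\ref{rem:u_int}. One small imprecision in your prose: (iii) plus the potential flux does not reduce to $g\eta\sum_\alpha\avg{\varphi}_\alpha\partial_x u_\alpha$ alone --- a term $g u_\alpha\partial_x\avg{\varphi z}_\alpha$ survives, and it is only together with this leftover that your second identity annihilates the type-(ii) work; the identities you state are nevertheless correct and the total residual does vanish.
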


\begin{remark}
Instead of~\eqref{eq:upwind_uT}, the definition
\begin{equation}
u_{\alpha+1/2} = \frac{1}{2}\left( \frac{\avg{\varphi
      u}_{\alpha+1}}{\avg{\varphi}_{\alpha+1}} + \frac{\avg{\varphi u}_{\alpha}}{\avg{\varphi}_{\alpha}} \right),
\label{eq:upwind_uT_c}
\end{equation}
is also possible and gives a vanishing right hand side in~\eqref{eq:energy_av_fin}. But such a choice
does not allow to obtain an energy balance in the variable density
case and does not give a maximum principle, at the discrete level,
see~\cite{JSM_JCP}. Simple calculations show that any other choice
that~\eqref{eq:upwind_uT} or~\eqref{eq:upwind_uT_c} leads to a non
negative r.h.s. in~\eqref{eq:energy_av_fin}, see Eq.~\eqref{eq:energy_av_fin_mc} in the
proof of prop.~\ref{prop:prop_ad}.
\label{rem:u_int}
\end{remark}

\begin{remark}
It is important to notice that whereas the solution $H,u,w,p$ of the
Euler system~\eqref{eq:euler_2d1}-\eqref{eq:ps},\eqref{eq:free_surf},\eqref{eq:bottom} also satisfies the
system~\eqref{eq:av_h}-\eqref{eq:av_w},
only the solutions $H,u,w,p$ of the
Euler system~\eqref{eq:euler_2d1}-\eqref{eq:ps},\eqref{eq:free_surf},\eqref{eq:bottom} satisfying the
closure relations~\eqref{eq:close1}-\eqref{eq:close3},\eqref{eq:upwind_uT} are also
solutions of the system~\eqref{eq:av_h1}-\eqref{eq:energy_av_fin}. On the
contrary, any solutions $\avg{\varphi}_\alpha$, $\avg{\varphi u}_\alpha$,
$\avg{\varphi w}_\alpha$ and~$\avg{p}_\alpha$ 
of~\eqref{eq:av_h1}-\eqref{eq:av_w1} with~\eqref{eq:upwind_uT} are also
solutions of~\eqref{eq:av_h}-\eqref{eq:energy_av}.
\end{remark}

\begin{proof}[Proof of prop.~\ref{prop:prop_ad}]
Only the manipulations allowing to obtain~\eqref{eq:energy_av_fin}
have to be detailed. For that purpose, we multiply~\eqref{eq:av_u1} by
$\frac{\avg{\varphi u}_\alpha}{\avg{\varphi}_\alpha}$ giving
\begin{multline*}
\left( \frac{\partial}{\partial t } \frac{\avg{\varphi u}_\alpha}{\avg{\varphi}_\alpha} +
\frac{\partial }{\partial x }\left( \frac{\avg{\varphi u}_\alpha^2}{\avg{\varphi}_\alpha} +
  \avg{p}_\alpha\right) \right) \frac{\avg{\varphi u}_\alpha}{\avg{\varphi}_\alpha} = \biggl(
u_{\alpha+1/2}G_{\alpha+1/2} -
u_{\alpha-1/2}G_{\alpha-1/2} \\
+\frac{\partial z_{\alpha+1/2}}{\partial x} p_{\alpha+1/2} -
\frac{\partial z_{\alpha-1/2}}{\partial x} p_{\alpha-1/2} \biggr)
\frac{\avg{\varphi u}_\alpha}{\avg{\varphi}_\alpha},
\end{multline*}
and we rewrite each of the obtained
terms.

Considering first the left hand side of the preceding equation
excluding the pressure terms, we denote
$$I_{u,\alpha}  = \left( \frac{\partial}{\partial t } \frac{\avg{\varphi u}_\alpha}{\avg{\varphi}_\alpha} +
\frac{\partial }{\partial x }\left( \frac{\avg{\varphi u}_\alpha^2}{\avg{\varphi}_\alpha} \right) \right) \frac{\avg{\varphi u}_\alpha}{\avg{\varphi}_\alpha},$$
and using~\eqref{eq:av1} we have
$$I_{u,\alpha}  = \frac{\partial }{\partial t } \left( \frac{\avg{\varphi u}_\alpha^2}{2\avg{\varphi}_\alpha} \right) + \frac{\partial }{\partial x}
\left(  \frac{\avg{\varphi u}_\alpha}{\avg{\varphi }_\alpha} \frac{\avg{\varphi u}_\alpha^2}{2\avg{\varphi }_\alpha} \right)
+ \frac{\avg{\varphi u}_\alpha^2}{2\avg{\varphi }_\alpha^2} \left( G_{\alpha+1/2} - G_{\alpha-1/2}\right).$$
Now we consider the contribution of the pressure terms over the energy
balance i.e.
$$I_{p,\alpha} = \left( \frac{\partial \avg{p}_\alpha}{\partial x } - p_{\alpha+1/2}\frac{\partial z_{\alpha+1/2}}{\partial x} +
p_{\alpha-1/2} \frac{\partial z_{\alpha-1/2}}{\partial x} \right) \frac{\avg{\varphi u}_\alpha}{\avg{\varphi}_\alpha}.$$
Using~\eqref{eq:p_moy} we get the equality
$$p_{\alpha+1/2}\frac{\partial z_{\alpha+1/2}}{\partial x} -
p_{\alpha-1/2} \frac{\partial z_{\alpha-1/2}}{\partial x} =
\frac{\avg{p}_\alpha}{\avg{\varphi}_\alpha}
\frac{\partial h_\alpha}{\partial x} - \avg{g\varphi}_\alpha\frac{\partial
z_\alpha}{\partial x},$$
holds, it comes
\begin{eqnarray*}
I_{p,\alpha} & = & \frac{\partial }{\partial x}\left( \avg{p}_\alpha
  \frac{\avg{\varphi u}_\alpha}{\avg{\varphi}_\alpha} \right) -
\avg{p}_\alpha\frac{\partial }{\partial x} \left( \frac{\avg{\varphi u}_\alpha}{\avg{\varphi}_\alpha}\right)-
\frac{\avg{p}_\alpha}{\avg{\varphi}_\alpha}\frac{\avg{\varphi u}_\alpha}{\avg{\varphi}_\alpha} \frac{\partial \avg{\varphi}_\alpha}{\partial x} +
g \avg{\varphi u}_\alpha\frac{\partial z_\alpha
}{\partial x}\\
& = & \frac{\partial}{\partial
  x}\left( \avg{p}_\alpha
  \frac{\avg{\varphi u}_\alpha}{\avg{\varphi}_\alpha} \right) - \frac{\avg{p}_\alpha}{\avg{\varphi}_\alpha}\frac{\partial \avg{\varphi u}_\alpha}{\partial x} +
\frac{\partial
}{\partial x} \left( g h_\alpha z_{\alpha} \frac{\avg{\varphi u}_\alpha}{\avg{\varphi}_\alpha} \right)
-z_{\alpha}\frac{\partial
}{\partial x} \left( g \avg{\varphi u}_\alpha \right)\\
& = & \frac{\partial }{\partial
  x} \left( \avg{p}_\alpha \frac{\avg{\varphi u}_\alpha}{\avg{\varphi}_\alpha} \right)- \frac{\avg{p}_\alpha}{\avg{\varphi}_\alpha}\frac{\partial \avg{\varphi u}_\alpha}{\partial x} +
\frac{\partial
}{\partial x} \left( g h_\alpha z_{\alpha} \frac{\avg{\varphi u}_\alpha}{\avg{\varphi}_\alpha}\right)
+z_{\alpha}\frac{\partial
}{\partial t} \left(  \avg{g\varphi} _\alpha  \right)\\
& & -g z_{\alpha} \left(G_{\alpha+1/2} -
  G_{\alpha-1/2}\right)\\
& = & \frac{\partial }{\partial
  x} \left( \avg{p}_\alpha \frac{\avg{\varphi u}_\alpha}{\avg{\varphi}_\alpha}\right) -
\frac{\avg{p}_\alpha}{\avg{\varphi}_\alpha}\frac{\partial \avg{\varphi u}_\alpha}{\partial x} +
\frac{\partial
}{\partial x} \left( g h_\alpha z_{\alpha} \frac{\avg{\varphi u}_\alpha}{\avg{\varphi}_\alpha}\right)
+\frac{\partial
}{\partial t} \left( g h_\alpha z_{\alpha}  \right)\\
& & - g h_\alpha\frac{\partial z_\alpha
}{\partial t} -g z_{\alpha} \left(G_{\alpha+1/2} - G_{\alpha-1/2}\right).
\end{eqnarray*}
Let us rewrite $I_{p,\alpha} $ under the form
\begin{eqnarray*}
I_{p,\alpha} 
& = & \frac{\partial }{\partial
  x} \left( \avg{p}_\alpha \frac{\avg{\varphi u}_\alpha}{\avg{\varphi}_\alpha} \right)+\frac{\partial
}{\partial t} \left( g h_\alpha z_{\alpha}  \right) +
g\frac{\partial
}{\partial x} \left( h_\alpha z_{\alpha} \frac{\avg{u}_\alpha}{h_\alpha}\right)\\
& & 
-g\left( z_{\alpha+1/2} G_{\alpha+1/2} -
 z_{\alpha-1/2} G_{\alpha-1/2}\right) + J_{p,\alpha},
\end{eqnarray*}
with
$$ J_{p,\alpha} = - \frac{\avg{p}_\alpha}{\avg{\varphi}_\alpha}\frac{\partial \avg{\varphi u}_\alpha}{\partial
  x} - g h_\alpha\frac{\partial z_{\alpha}
}{\partial t} +
g\frac{h_\alpha}{2} \left(G_{\alpha+1/2} + G_{\alpha-1/2}\right).$$
Since we have
$$
\frac{\avg{p}_\alpha}{\avg{\varphi}_\alpha}\frac{\partial \avg{\varphi u}_\alpha}{\partial x} 
= \frac{\avg{\varphi}_\alpha}{\avg{\varphi}_\alpha}\left( G_{\alpha+1/2} -
  G_{\alpha-1/2} - \frac{\partial
    h_\alpha}{\partial t} \right),
$$
we obtain
\begin{eqnarray*}
J_{p,\alpha} 
& = & p_{\alpha+1/2}\frac{\partial z_{\alpha+1/2}}{\partial
  t} - p_{\alpha-1/2}\frac{\partial z_{\alpha-1/2}}{\partial
  t} - p_{\alpha+1/2} G_{\alpha+1/2} +
p_{\alpha-1/2} G_{\alpha-1/2}.
\end{eqnarray*}
Then summing $I_{u,\alpha}$ and $I_{p,\alpha}$ gives
\begin{multline*}
  \frac{\partial}{\partial t} \avg{E\left(z;\frac{\avg{\varphi u}_\alpha}{\avg{\varphi}_\alpha}\right)}_\alpha +
  \frac{\partial}{\partial x} 
  \avg{\frac{\avg{\varphi u}_\alpha}{\avg{\varphi}_\alpha} \left(
    E\left(z;\frac{\avg{\varphi u}_\alpha}{\avg{\varphi}_\alpha}\right) + \avg{p}_\alpha
  \right)}_\alpha = \\
 \left( u_{\alpha+1/2} \frac{\avg{\varphi u}_\alpha}{\avg{\varphi}_\alpha} -
  \frac{1}{2}\left(\frac{\avg{\varphi u}_\alpha}{\avg{\varphi}_\alpha}\right)^2\right)
G_{\alpha+1/2} -  \left( u_{\alpha-1/2} \frac{\avg{\varphi u}_\alpha}{\avg{\varphi}_\alpha} -
  \frac{1}{2}\left(\frac{\avg{\varphi u}_\alpha}{\avg{\varphi}_\alpha}\right)^2\right) G_{\alpha-1/2}.
\end{multline*}
Finally, the sum of the preceding relations for $\alpha=1,\ldots,N$
\begin{multline}
  \frac{\partial}{\partial t} \sum_{\alpha=1}^N \avg{E\left(z;\frac{\avg{\varphi u}_\alpha}{\avg{\varphi}_\alpha}\right)}_\alpha +
  \frac{\partial}{\partial x} \sum_{\alpha=1}^N
  \avg{\frac{\avg{\varphi u}_\alpha}{\avg{\varphi}_\alpha} \left(
    E\left(z;\frac{\avg{\varphi u}_\alpha}{\avg{\varphi}_\alpha}\right) + \avg{p}_\alpha
  \right)}_\alpha = \\
 \sum_{\alpha=1}^N \left( u_{\alpha+1/2} \left(\frac{\avg{\varphi u}_\alpha}{\avg{\varphi}_\alpha} -\frac{\avg{\varphi u}_{\alpha+1}}{\avg{\varphi}_{\alpha+1}}\right) -
  \frac{1}{2}\left(\frac{\avg{\varphi u}_\alpha}{\avg{\varphi}_\alpha}\right)^2 +
  \frac{1}{2}\left(\frac{\avg{\varphi u}_{\alpha+1}}{\avg{\varphi}_{\alpha+1}}\right)^2\right)
G_{\alpha+1/2},
\label{eq:energy_av_fin_mc}
\end{multline}
and the definition~\eqref{eq:upwind_uT} gives relation~\eqref{eq:energy_av_fin} that
completes the proof. Notice that any other choice than~\eqref{eq:upwind_uT}
or~\eqref{eq:upwind_uT_c} leads to a non negative
r.h.s. in~\eqref{eq:energy_av_fin_mc}, see remark~\ref{rem:u_int}.
\end{proof}

\section{The proposed layer-averaged Euler system}
\label{sec:euler_mc}

\subsection{Formulation}

The closure relations~\eqref{eq:close1}-\eqref{eq:close3} motivate the definition of
piecewise constant approximation of the variables $u$ and $w$.

Let us consider the space
$\Bbb{P}_{0,H}^{N,t}$ of piecewise constant functions defined by
$$\Bbb{P}_{0,H}^{N,t} = \left\{ \1_{z \in L_\alpha(x,t)}(z),\quad \alpha\in\{1,\ldots,N\}\right\}.$$ 
Using this formalism, the
projection of $u$ and $w$ on $\Bbb{P}_{0,H}^{N,t}$ is a
piecewise constant function defined by
\begin{equation}
X^{N}(x,z,\{z_\alpha\},t)  = \sum_{\alpha=1}^N
\1_{]z_{\alpha-1/2},z_{\alpha+1/2}[}(z)X_\alpha(x,t),
\label{eq:ulayer}
\end{equation}
for $X \in (u,w)$. 
In the following, we no more handle variables corresponding to vertical means of the
solution of the Euler
equations~\eqref{eq:euler_2d1}-\eqref{eq:euler_2d3} and we adopt
notations inherited from~\eqref{eq:ulayer}.

By analogy with~\eqref{eq:av_h1}-\eqref{eq:av_w1} we consider the following model
\begin{eqnarray}
&&\sum_{\alpha=1}^N \frac{\partial h_\alpha }{\partial t }   + 
\sum_{\alpha=1}^N \frac{\partial (h_\alpha u_\alpha)}{\partial x } =0,
\label{eq:H}\\
&&\frac{\partial  h_\alpha u_\alpha}{\partial t } +
\frac{\partial }{\partial x }\left(h_\alpha u^2_\alpha +
  h_\alpha p_\alpha\right)=
u_{\alpha+1/2}G_{\alpha+1/2} -
u_{\alpha-1/2}G_{\alpha-1/2}\nonumber\\
&& \hspace*{5.7cm}
+\frac{\partial z_{\alpha+1/2}}{\partial x} p_{\alpha+1/2} - \frac{\partial z_{\alpha-1/2}}{\partial x} p_{\alpha-1/2},
\label{eq:eq4}\\
& & \frac{\partial}{\partial t} \left(\frac{z_{\alpha+1/2}^2 - z_{\alpha-1/2}^2}{2}\right) +
\frac{\partial}{\partial x} \left( \frac{z_{\alpha+1/2}^2 -
    z_{\alpha-1/2}^2}{2}u_\alpha\right) = h_\alpha
w_\alpha\nonumber\\
& & \hspace*{6cm} + z_{\alpha+1/2}G_{\alpha+1/2} - z_{\alpha-1/2}
G_{\alpha-1/2},\label{eq:eq6}
\end{eqnarray}
by analogy with~\eqref{eq:Qalphater}
\begin{equation}
G_{\alpha+1/2} =  \frac{\partial }{\partial t }\sum_{j=1}^\alpha h_j
+  \frac{\partial }{\partial x }\sum_{j=1}^\alpha \left( h_j u_j \right),
\label{eq:Qalphabis}
\end{equation}
and we have $p_\alpha$, $p_{\alpha+1/2}$ 
given by
\begin{equation}
p_{\alpha} = g \left(\frac{h_\alpha}{2} + \sum_{j=\alpha+1}^N
h_j \right) \qquad \mbox{\rm and}\qquad p_{\alpha+1/2} = 
g\sum_{j=\alpha+1}^N h_j.
\label{eq:palpha}
\end{equation}
The definition of $u_{\alpha+1/2}$ is equivalent to~\eqref{eq:upwind_uT}
i.e.
\begin{equation*}
u_{\alpha+1/2} =
\left\{\begin{array}{ll}
u_\alpha & \mbox{if } \;G_{\alpha+1/2} \leq 0\\
u_{\alpha+1} & \mbox{if } \;G_{\alpha+1/2} > 0
\end{array}\right.
\label{eq:upwind_uT_n}
\end{equation*}

The smooth solutions of~\eqref{eq:H}-\eqref{eq:eq6} satisfy the energy
balance
\begin{eqnarray}
& & \hspace*{-0.5cm}\frac{\partial }{\partial t} E_{\alpha} + \frac{\partial}{\partial x}
\left( u_\alpha\left(E_{\alpha} + h_\alpha p_\alpha \right)
\right) =
\left( u_{\alpha+1/2}u_{\alpha} -\frac{u_{\alpha}^2}{2} + p_{\alpha+1/2} + g z_{\alpha+1/2}\right) G_{\alpha+1/2}\nonumber \\
& & \qquad\qquad
-\left( u_{\alpha-1/2}u_\alpha -\frac{u_{\alpha}^2}{2} + p_{\alpha-1/2} + g
  z_{\alpha-1/2}\right) G_{\alpha-1/2} \nonumber \\
& & \qquad\qquad -p_{\alpha+1/2}\frac{\partial z_{\alpha+1/2}}{\partial t} +
p_{\alpha-1/2}\frac{\partial z_{\alpha-1/2}}{\partial t},
\label{eq:energy_mc}
\end{eqnarray}
with
$$E_{\alpha}=\frac{h_\alpha
  u_\alpha^2}{2}+\frac{g}{2}(z_{\alpha+1/2}^2 -
  z_{\alpha-1/2}^2) = h_\alpha \biggl( \frac{u_\alpha^2}{2} + g z_\alpha \biggr).$$
Adding the preceding relations for $\alpha=1,\ldots,N$, we obtain the global equality
\begin{eqnarray}
\frac{\partial }{\partial t} \left( \sum_{\alpha=1}^N E_{\alpha}\right) + \frac{\partial}{\partial x}
\left(\sum_{\alpha=1}^N u_\alpha\left(E_{\alpha}+
    h_\alpha p_\alpha \right) \right) = -\sum_{\alpha=1}^N \frac{1}{2}(u_{\alpha+1/2}
- u_\alpha)^2 |G_{\alpha+1/2}|.
\label{eq:energy_glo}
\end{eqnarray}
Using~\eqref{eq:palpha}, the pressure terms in~\eqref{eq:eq4} can be
rewritten under the form
\begin{equation}
\frac{\partial}{\partial x} \left(h_\alpha p_\alpha\right) -
\frac{\partial z_{\alpha+1/2}}{\partial x} p_{\alpha+1/2} +
\frac{\partial z_{\alpha-1/2}}{\partial x} p_{\alpha-1/2} =
\frac{\partial}{\partial x} \left(\frac{g}{2}H h_\alpha\right) +
gh_\alpha\frac{\partial z_b}{\partial x}.
\label{eq:pres_interf}
\end{equation}

\subsection{The vertical velocity}
\label{subsec:cal_w}

The equation~\eqref{eq:eq6} is a definition of the vertical velocity $w^N $ given by \eqref{eq:ulayer}. The quantities $w_\alpha$ are
not unknowns of the problem but only output variables. Indeed, once $H$
and $u^N$ have been calculated
solving~\eqref{eq:H},\eqref{eq:eq4} with~\eqref{eq:Qalphabis}, the
vertical velocities $w_\alpha$ can be determined using~\eqref{eq:eq6}.

Using simple manipulations, Eq.~\eqref{eq:eq6} can be rewritten under
several forms. In particular, the following proposition holds

\begin{proposition}
Let us introduce $\hat{w} = \hat{w}(x,z,t)$ defined by
\begin{equation}\frac{\partial u^N}{\partial x} + \frac{\partial
  \hat{w}}{\partial z} =0,
  \label{eq:conti_w}
\end{equation}
The quantity $\hat{w}$ is affine in $z$ and discontinuous at each interface
$z_{\alpha+1/2}$,
$\hat{w}$ can be written:
\begin{equation}
\hat{w} = k_\alpha - z \frac{\partial u_\alpha}{\partial
  x},
\label{eq:w_hat}
\end{equation}
with $k_\alpha=k_\alpha (x,t)$ recursively defined by
\begin{eqnarray*}
& & k_1 = \frac{\partial (z_bu_1)}{\partial
  x},\\
& & k_{\alpha+1} = k_\alpha + \frac{\partial }{\partial
  x}\bigl( z_{\alpha+1/2}(u_{\alpha+1} - u_\alpha)\bigr).
\end{eqnarray*}
Therefore we have
\begin{equation}
\int_{z_{\alpha-1/2}}^{z_{\alpha+1/2}} \hat{w} dz = h_\alpha
w_\alpha,
\label{eq:w_hat_mean}
\end{equation}
meaning the quantities $\hat{w}$ is a natural and consistent
affine extension of the layer-averaged quantities $w_\alpha$ defined
by~\eqref{eq:eq6}. Using~\eqref{eq:w_hat_mean}, an integration along the layer
$\alpha$ of~\eqref{eq:w_hat} gives
\begin{equation}
h_\alpha w_\alpha = h_\alpha k_\alpha - \frac{z_{\alpha+1/2}^2 - z_{\alpha-1/2}^2}{2} \frac{\partial u_\alpha}{\partial x}
= h_\alpha \left(  k_\alpha - z_{\alpha}  \frac{\partial u_\alpha}{\partial x} \right).
\label{eq:mean_w}
\end{equation}
or
\begin{equation}
w_\alpha = k_\alpha - z_\alpha \frac{\partial u_\alpha}{\partial
  x} = \hat w (z_\alpha).
\end{equation}
\label{prop:w_alpha}
\end{proposition}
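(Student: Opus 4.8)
The strategy is threefold: (i) exploit the piecewise-constant structure of $u^N$ to obtain the affine form \eqref{eq:w_hat}; (ii) pin down the integration constants $k_\alpha$; and (iii) verify the averaging identity \eqref{eq:w_hat_mean} against the definition \eqref{eq:eq6} of $w_\alpha$, after which \eqref{eq:mean_w} is immediate.

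For (i), on each layer $L_\alpha = ]z_{\alpha-1/2},z_{\alpha+1/2}[$ the reconstruction \eqref{eq:ulayer} gives $u^N = u_\alpha(x,t)$, independent of $z$, so \eqref{eq:conti_w} reduces to $\partial_z\hat w = -\partial_x u_\alpha$ on $L_\alpha$; integrating in $z$ yields $\hat w = k_\alpha(x,t) - z\,\partial_x u_\alpha$, i.e. \eqref{eq:w_hat}, which is affine on each layer and a priori discontinuous across the interfaces. For (ii), relation \eqref{eq:conti_w} leaves one free function of $(x,t)$ per layer; the bottom no-penetration condition \eqref{eq:bottom}, written for the reconstruction as $\hat w(x,z_b,t) = u_1\,\partial_x z_b$, fixes $k_1 = u_1\partial_x z_b + z_b\partial_x u_1 = \partial_x(z_b u_1)$, while imposing that the jump $[\hat w]_{\alpha+1/2} = \hat w(z_{\alpha+1/2}^+) - \hat w(z_{\alpha+1/2}^-)$ equal $(u_{\alpha+1}-u_\alpha)\partial_x z_{\alpha+1/2}$ — the value that renders the interface mass flux $G_{\alpha+1/2}$ consistent — gives, on inserting \eqref{eq:w_hat} from both sides of $z_{\alpha+1/2}$, exactly the stated recursion for $k_{\alpha+1}$.

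For (iii), integrating \eqref{eq:w_hat} over $L_\alpha$ is elementary and gives $\int_{z_{\alpha-1/2}}^{z_{\alpha+1/2}}\hat w\,dz = h_\alpha k_\alpha - \tfrac12(z_{\alpha+1/2}^2 - z_{\alpha-1/2}^2)\,\partial_x u_\alpha = h_\alpha\bigl(k_\alpha - z_\alpha\,\partial_x u_\alpha\bigr)$, using $z_{\alpha+1/2}^2 - z_{\alpha-1/2}^2 = 2h_\alpha z_\alpha$ from \eqref{eq:zalpha}. It then remains to show that $h_\alpha(k_\alpha - z_\alpha\partial_x u_\alpha)$ is precisely the quantity $h_\alpha w_\alpha$ prescribed by \eqref{eq:eq6}. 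To this end I would telescope the recursion, using $z_{1/2}=z_b$ and an Abel summation, into the closed form $k_\alpha = \partial_x\bigl(z_{\alpha-1/2}u_\alpha - \sum_{j=1}^{\alpha-1} h_j u_j\bigr)$, and rewrite $G_{\alpha+1/2}$ from \eqref{eq:Qalphabis} as $\partial_t z_{\alpha+1/2} + \partial_x\sum_{j=1}^\alpha h_j u_j$ (here $z_b$ is time-independent). Substituting both into \eqref{eq:eq6}, the time-derivative contributions combine into $\partial_t(h_\alpha z_\alpha)$ on each side and cancel, leaving a purely algebraic identity among $x$-derivatives of products of $h_\alpha$, $u_\alpha$ and $z_{\alpha\pm1/2}$; expanding it and using $z_\alpha = z_{\alpha\pm1/2}\mp h_\alpha/2$ together with $h_\alpha = z_{\alpha+1/2}-z_{\alpha-1/2}$ makes it collapse.

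This last bookkeeping step — matching the layer-average of the affine reconstruction with the $w_\alpha$ defined by the first-moment equation \eqref{eq:eq6} — is the only real obstacle; the affine form, the integration, and the telescoping are all routine. An equivalent route avoiding the Abel summation is to prove $w_\alpha = k_\alpha - z_\alpha\partial_x u_\alpha$ directly by induction on $\alpha$, comparing the recursion for $k_{\alpha+1}$ with the difference of \eqref{eq:eq6} written for layers $\alpha$ and $\alpha+1$, but the algebraic content is identical.
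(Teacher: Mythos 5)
Your proposal is correct and takes essentially the same route as the paper: determine $\hat w$ layer by layer from \eqref{eq:conti_w} with the bottom condition, obtain $k_\alpha=\frac{\partial}{\partial x}\bigl(z_{\alpha-1/2}u_\alpha\bigr)-\sum_{j=1}^{\alpha-1}\frac{\partial}{\partial x}(h_j u_j)$, and verify \eqref{eq:w_hat_mean} by rewriting \eqref{eq:eq6} with \eqref{eq:Qalphabis} so that the time derivatives cancel and the remaining spatial identity collapses, exactly as in the paper's proof. The only cosmetic difference is that you derive the recursion for $k_{\alpha+1}$ from the interface jump (mass-flux consistency) instead of reading it off the primitive $\hat w=-\frac{\partial}{\partial x}\int_{z_b}^{z}u^N\,dz$; the two are equivalent.
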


\begin{proof}[Proof of prop.~\ref{prop:w_alpha}]
A simple integration along $z$ of equation \eqref{eq:conti_w} using~\eqref{eq:bottom} gives
\begin{equation}
\hat w = -\frac{\partial}{\partial x} \int_{z_b}^z u^N\ dz,
\label{eq:w_int1}
\end{equation}
and therefore, for $z\in L_1$ we get
$$\hat w = -\frac{\partial}{\partial x} \int_{z_b}^z u_1\ dz =
-\frac{\partial}{\partial x} \bigl( (z-z_b) u_1\bigr),$$
i.e.
$$\hat{w} = \frac{\partial}{\partial x} (z_b u_1) - z \frac{\partial u_1}{\partial x}.$$
For $z\in L_\alpha$, relation~\eqref{eq:w_int1} gives
\begin{equation}
\hat w = -\sum_{j=1}^{\alpha-1} \frac{\partial}{\partial x} (h_j u_j) 
-\frac{\partial}{\partial x} \bigl( (z-z_{\alpha-1/2}) u_\alpha
\bigr),
\label{eq:hat_w1}
\end{equation}
and we easily obtain
$$\hat{w} = k_\alpha - z \frac{\partial u_\alpha}{\partial
  x}.$$
Now we intend to prove~\eqref{eq:w_hat_mean}.

Using the definition~\eqref{eq:zalpha}, relation~\eqref{eq:eq6} also writes
$$h_\alpha w_\alpha = \frac{\partial}{\partial x}  (z_\alpha h_\alpha
u_\alpha) - z_{\alpha+1/2} \sum_{j=1}^\alpha \frac{\partial (h_j u_j)}{\partial x}+ z_{\alpha-1/2}\sum_{j=1}^{\alpha-1} \frac{\partial (h_j u_j)}{\partial x},
$$
leading to a new expression governing
$w_\alpha$ under the form
\begin{equation}
h_\alpha w_\alpha = -\frac{h_\alpha}{2}\frac{\partial (h_\alpha u_\alpha)}{\partial x}
- h_\alpha \sum_{j=1}^{\alpha-1}
\frac{\partial (h_j u_j)}{\partial x}
+ h_\alpha u_\alpha \frac{\partial z_\alpha}{\partial x}.
\label{eq:w_alpha}
\end{equation}
And from~\eqref{eq:hat_w1}, we get
\begin{eqnarray*}
\int_{z_{\alpha-1/2}}^{z_{\alpha+1/2}}\hat{w} dz & = & -h_\alpha\sum_{j=1}^{\alpha-1} \frac{\partial}{\partial x} (h_j u_j) 
+ h_\alpha\frac{\partial}{\partial x} \bigl( z_{\alpha-1/2} u_\alpha\bigr)
- h_{\alpha} z_{\alpha}\frac{\partial
  u_\alpha}{\partial x}\\
& = & -h_\alpha\sum_{j=1}^{\alpha-1} \frac{\partial}{\partial x} (h_j u_j) 
-\frac{h_\alpha}{2} \frac{\partial
  }{\partial x}\left( h_\alpha u_\alpha\right) +h_\alpha u_\alpha\frac{\partial
 z_\alpha }{\partial x},
\end{eqnarray*}
corresponding to~\eqref{eq:w_alpha} and proving the result.
\end{proof}

Using also~\eqref{eq:pres_interf}, we are able to rewrite the system~\eqref{eq:H}-\eqref{eq:eq6} under
the form
\begin{eqnarray}
&&\sum_{\alpha=1}^N \frac{\partial h_\alpha }{\partial t }   + 
\sum_{\alpha=1}^N \frac{\partial (h_\alpha u_\alpha)}{\partial x } =0,
\label{eq:H_bis}\\
&&\frac{\partial  h_\alpha u_\alpha}{\partial t } +
\frac{\partial }{\partial x }\left(h_\alpha u^2_\alpha +
  \frac{g}{2}h_\alpha H\right)= - gh_\alpha\frac{\partial
  z_{b}}{\partial x} +
u_{\alpha+1/2}G_{\alpha+1/2} -
u_{\alpha-1/2}G_{\alpha-1/2},
\label{eq:eq4_bis}\\
&& w_\alpha = -\frac{1}{2}\frac{\partial (h_\alpha u_\alpha)}{\partial
  x} - \sum_{j=1}^{\alpha-1}
\frac{\partial (h_j u_j)}{\partial x}
+ u_\alpha \frac{\partial z_\alpha}{\partial x}.\label{eq:eq6_bis}
\end{eqnarray}

\section{The Navier-Stokes system}
\label{sec:av_NS}

Instead of considering the Euler system, we can also depart from the
Navier-Stokes equations to derive a layer-averaged model.

The model derivation is similar to what has been done in
Section~\ref{sec:av_euler} for the Euler system.

\subsection{Layer averaging of the viscous terms}
In this paragraph and the both following, the components of the Cauchy stress tensor $\Sigma$
are not specified.
It remains to find a layer-averaged formulation for
the r.h.s. of Eq.~\eqref{eq:NS_2d2_mod}, {\sl i.e.}
$$V_\alpha = \int_{z_{\alpha-1/2}}^{z_{\alpha+1/2}} \left(
  \frac{\partial \Sigma_{xx}}{\partial x} + \frac{\partial
  \Sigma_{xz}}{\partial z} + \frac{\partial^2}{\partial x^2} \int_z^\eta
\Sigma_{zx} dz_1 - \frac{\partial
    \Sigma_{zz}}{\partial x} \right) dz.$$
We have
\begin{eqnarray*}
V_\alpha & = & \frac{\partial }{\partial x}
\int_{z_{\alpha-1/2}}^{z_{\alpha+1/2}} \left( \Sigma_{xx} + \frac{\partial}{\partial x} \int_z^\eta
\Sigma_{zx} dz_1-
  \Sigma_{zz} \right) dz \\
& & + \Sigma_{xz}|_{\alpha+1/2} - \frac{\partial z_{\alpha+1/2}}{\partial x}
\left.\left(\Sigma_{xx} + \frac{\partial}{\partial x} \int_z^\eta
\Sigma_{zx} dz_1 -
  \Sigma_{zz}\right) \right|_{z_{\alpha+1/2}} \\
& & - \Sigma_{xz}|_{\alpha-1/2} + \frac{\partial z_{\alpha-1/2}}{\partial x}
\left.\left(\Sigma_{xx} + \frac{\partial}{\partial x} \int_z^\eta
\Sigma_{zx} dz_1 -
  \Sigma_{zz}\right) \right|_{z_{\alpha-1/2}}.
\end{eqnarray*}
In the expression $V_\alpha$ we have the term
\begin{align*}
\frac{\partial }{\partial x}
\int_{z_{\alpha-1/2}}^{z_{\alpha+1/2}} & \left( \frac{\partial}{\partial x} \int_z^\eta
\Sigma_{zx} dz_1\right) dz \\
= &  \frac{\partial}{\partial x}
\Biggl( \frac{\partial}{\partial
  x}\int_{z_{\alpha-1/2}}^{z_{\alpha+1/2}} \int_z^\eta \Sigma_{zx} dz_1  dz - \frac{\partial z_{\alpha+1/2}}{\partial x}\int_{z_{\alpha+1/2}}^\eta
\Sigma_{zx} dz\\
& + \frac{\partial z_{\alpha-1/2}}{\partial x}\int_{z_{\alpha-1/2}}^\eta
\Sigma_{zx} dz\Biggr)\\
= &  \frac{\partial}{\partial x}
\Biggl( \frac{\partial}{\partial
  x}\int_{z_{\alpha-1/2}}^{z_{\alpha+1/2}} z\Sigma_{zx} dz +  z_{\alpha+1/2}\frac{\partial}{\partial x}\int_{z_{\alpha+1/2}}^\eta
\Sigma_{zx} dz\\
& -  z_{\alpha-1/2}\frac{\partial}{\partial x}\int_{z_{\alpha-1/2}}^\eta
\Sigma_{zx} dz\Biggr),
\end{align*}
and
\begin{eqnarray*}
& &\frac{\partial z_{\alpha+1/2}}{\partial x}
\left.\left(\frac{\partial}{\partial x} \int_z^\eta
\Sigma_{zx} dz_1 \right) \right|_{z_{\alpha+1/2}} = \frac{\partial z_{\alpha+1/2}}{\partial x}
\frac{\partial}{\partial x} \int_{z_{\alpha+1/2}}^\eta
\Sigma_{zx} dz + \left(\frac{\partial z_{\alpha+1/2}}{\partial x}\right)^2
\Sigma_{zx|_{\alpha+1/2}},\\
& &\frac{\partial z_{\alpha-1/2}}{\partial x}
\left.\left(\frac{\partial}{\partial x} \int_z^\eta
\Sigma_{zx} dz_1 \right) \right|_{z_{\alpha-1/2}} = \frac{\partial z_{\alpha-1/2}}{\partial x}
\frac{\partial}{\partial x} \int_{z_{\alpha-1/2}}^\eta
\Sigma_{zx} dz + \left(\frac{\partial z_{\alpha-1/2}}{\partial x}\right)^2
\Sigma_{zx|_{\alpha-1/2}}.
\end{eqnarray*}

\subsection{Definitions and closure relation}

The expression of the viscous terms generally involving second order derivatives,
their discretization requires quadrature formula that are
not inherited from the layer-averaged discretization. 
In particular, at this step of the paper, we adopt the following notations
\begin{equation}
\Sigma_{ab|_{\alpha+1/2}} \approx \Sigma_{ab,{\alpha+1/2}} \ ,
\label{eq:sigma_xz_inter}
\end{equation}
and
\begin{equation}
 \Sigma_{ab|_{\alpha}} \approx \Sigma_{ab,{\alpha}} \ ,
\label{eq:sigma_xz_m}
\end{equation}
and the following definitions,
\begin{equation}
\int_{z_{\alpha-1/2}}^{z_{\alpha+1/2}} \Sigma_{ab} dz \approx h_\alpha \Sigma_{ab,\alpha} \ ,
\label{eq:sigma_xz}
\end{equation}
with $(a,b) \in (x,z)^2$. For the terms having the form
$$\int_{z_{\alpha-1/2}}^{z_{\alpha+1/2}} z \Sigma_{ab} dz,$$
a closure relation is needed and we choose the approximation
\begin{equation}
\int_{z_{\alpha-1/2}}^{z_{\alpha+1/2}} z \Sigma_{ab} dz \approx \frac{z_{\alpha+1/2}^2 - z_{\alpha-1/2}^2}{2} \Sigma_{ab,\alpha} = h_\alpha z_\alpha \Sigma_{ab,\alpha} .
\label{eq:zsigma_xz}
\end{equation}

For each interface  $z_{\alpha+1/2}$ we introduce the unit normal vector  ${\bf n}_{\alpha+1/2}$
and the unit tangent vector ${\bf t}_{\alpha+1/2}$ given by:
$${\bf n}_{\alpha+1/2} = 
  \frac{1}{\sqrt{1 + \bigl(\frac{\partial z_{\alpha+1/2}}{\partial x}\bigr)^2}} 
   \left(\begin{array}{c} -\frac{\partial z_{\alpha+1/2}}{\partial
         x}\\ 1 \end{array} \right)\equiv \left(\begin{array}{c} -s_{\alpha+1/2} \\ c_{\alpha+1/2} \end{array} \right),\quad {\bf t}_{\alpha+1/2} = 
  \left(\begin{array}{c} c_{\alpha+1/2} \\ s_{\alpha+1/2} \end{array} \right).$$
 
Then, for $0\leqslant \alpha \leqslant N$, we have the following expression 
\begin{multline}
{\bf t}_{\alpha+1/2} \cdot \Sigma_{\alpha+1/2} {\bf n}_{\alpha+1/2}
= \frac{1}{1 + \bigl(\frac{\partial z _{\alpha+1/2}}{\partial
    x}\bigr)^2}\Biggl( \Sigma_{xz,\alpha+1/2} \\
- \frac{\partial
  z_{\alpha+1/2}}{\partial x}\Bigl( \Sigma_{xx,\alpha+1/2} + \frac{\partial
  z_{\alpha+1/2}}{\partial x}\Sigma_{zx,\alpha+1/2} - \Sigma_{zz,\alpha+1/2}\Bigr)\biggr),
\label{eq:BC_layer}
\end{multline}
{which can be rewritten as}
\begin{equation}
{\bf t}_{\alpha+1/2} \cdot \Sigma_{\alpha+1/2} {\bf
  n}_{\alpha+1/2} =  c_{\alpha+1/2}^2 \sigma_{\alpha+1/2} \ ,
\label{eq:BC_layer1}
\end{equation}
by introducing the following notation,
\begin{equation}
\sigma_{\alpha+1/2} = \Sigma_{xz,\alpha+1/2} - 
\frac{\partial
  z_{\alpha+1/2}}{\partial x}\Bigl( \Sigma_{xx,\alpha+1/2} + \frac{\partial
  z_{\alpha+1/2}}{\partial x}\Sigma_{zx,\alpha+1/2} -
\Sigma_{zz,\alpha+1/2}\Bigr).
\label{eq:petitsigma_xz_m}
\end{equation}
Remark that, for $0\leqslant\alpha\leqslant N$, the quantity ${\bf t}_{\alpha+1/2} \cdot \Sigma_{\alpha+1/2} {\bf
  n}_{\alpha+1/2}$ represents the tangential component of the stress
tensors at the interface $z_{\alpha+1/2}$.  And for $\alpha=\{0,N\}$, the
quantities~\eqref{eq:BC_layer} coincide with the boundary conditions
and hence are given. More precisely (since $c_{1/2}=c_b$) the Navier friction at bottom gives
\begin{equation}
 {\bf t}_{1/2} \cdot \Sigma_{1/2} {\bf
  n}_{1/2}= \frac{\kappa}{c_b} u_1 = \sigma_{1/2} c_{1/2}^2.
  \label{eq:coulom_layer1}
\end{equation}
Compared to equation (\ref{eq:BC_z_b1}), velocity in the first layer $u_1$ is used since $u_b$ is not a variable of our system. It is consistent with the convention
(\ref{eq:convention2}) and definition (\ref{eq:upwind_uT}).
At the surface we have
$${\bf t}_{N+1/2} \cdot \Sigma_{N+1/2} {\bf
  n}_{N+1/2}= \sigma_{N+1/2} c_{N+1/2}^2 = 0.$$

\begin{remark}
In (\ref{eq:coulom_layer1}) as in section \ref{sec:NS} , we use the expression ${\bf t}_{b} \cdot \Sigma{\bf n}_{b}$
to consider a Navier friction
at the bottom since on an impermeable boundary (\ref{eq:BC_z_b1}) is equivalent to (\ref{eq:BC_z_b}). For $1<\alpha<N-1$,
the flow can move across the interface
$z_{\alpha+1/2}$ and we cannot give a formulation directly comparable to (\ref{eq:BC_z_b}).
\end{remark}

\subsection{Layer-averaged Navier-Stokes system}

We have the following proposition.
\begin{proposition}
Using formula~\eqref{eq:sigma_xz},\eqref{eq:zsigma_xz} and \eqref{eq:petitsigma_xz_m}, the layer-averaging applied to the Navier-Stokes
system~\eqref{eq:NS_2d1_mod}-\eqref{eq:NS_2d2_mod} completed with the
boundary conditions~\eqref{eq:free_surf}-\eqref{eq:BC_z_b} leads to the system
\begin{eqnarray}
& & \frac{\partial}{\partial t} \sum_{j=1}^N h_j +
\frac{\partial}{\partial x} \sum_{j=1}^N h_j u_j = 0,\label{eq:NS_avz_111}\\
& & \frac{\partial}{\partial t} (h_\alpha u_\alpha) +
\frac{\partial}{\partial x}  \left( h_\alpha u_\alpha^2 + \frac{g}{2}h_\alpha H\right)
= -gh_{\alpha}\frac{\partial z_b}{\partial x} +
u_{\alpha+1/2}G_{\alpha+1/2} - u_{\alpha-1/2}
G_{\alpha-1/2}\nonumber\\
& & \qquad + \frac{\partial}{\partial x}\left(
  h_\alpha\Sigma_{xx,\alpha} - h_\alpha\Sigma_{zz,\alpha} +
  \frac{\partial}{\partial x} \Bigl( h_\alpha z_\alpha \Sigma_{zx,\alpha} \Bigr)\right) \nonumber\\
& & \qquad + z_{\alpha+1/2} \frac{\partial^2}{\partial x^2}
\sum_{j=\alpha+1}^N h_j\Sigma_{zx,j} - z_{\alpha-1/2}
\frac{\partial^2}{\partial x^2} \sum_{j=\alpha}^N h_j\Sigma_{zx,j} \nonumber\\
& & \qquad + {\sigma_{\alpha+1/2} - \sigma_{\alpha-1/2} } \qquad ,\label{eq:NS_avz_222}\\
& & w_\alpha = -\frac{1}{2}\frac{\partial (h_\alpha u_\alpha)}{\partial
  x} - \sum_{j=1}^{\alpha-1}
\frac{\partial (h_j u_j)}{\partial x}
+ u_\alpha \frac{\partial z_\alpha}{\partial x},\qquad \alpha=1,\dots,N \label{eq:NS_avz_444}
\end{eqnarray}
with the exchange terms $G_{\alpha\pm1/2}$ given
by~\eqref{eq:Qalphabis} {and the interface terms $\sigma_{\alpha\pm1/2}$ given by \eqref{eq:petitsigma_xz_m}}.

For smooth solutions, we obtain the balance
\begin{align}
&\frac{\partial }{\partial t} \left( \sum_{\alpha=1}^N
  E_{\alpha}\right) +  \frac{\partial}{\partial x}
\Biggl(\sum_{\alpha=1}^N u_\alpha\biggl( E_{\alpha}+
    \frac{g}{2} h_\alpha H - h_\alpha\bigl(\Sigma_{xx,\alpha} -
    \Sigma_{zz,\alpha}\bigr) \nonumber\\
& - \Bigl(\frac{\partial z_{\alpha} }{\partial x} h_\alpha \Sigma_{zx,\alpha}  + 
 h_{\alpha} \frac{\partial}{\partial x}\bigl(\frac 1 2 h_\alpha \Sigma_{zx,\alpha}+\sum_{j=\alpha+1}^N h_j\Sigma_{zx,j}\bigr)\Bigr) \biggr) 
 - \sum_{\alpha=1}^N  w_\alpha h_\alpha \Sigma_{zx,\alpha} \Biggr)  \nonumber\\
=& - \sum_{\alpha=1}^N  \biggl( \frac{\partial
    u_\alpha}{\partial x} h_\alpha \left( \Sigma_{xx,\alpha} -
  \Sigma_{zz,\alpha} \right) \nonumber\\
& +  \Bigl(\frac{\partial
  w_\alpha}{\partial x} + \frac{\partial z_\alpha}{\partial x}\frac{\partial
    u_\alpha}{\partial x}\Bigr)h_\alpha \Sigma_{zx,\alpha} 
    + { \sigma_{\alpha+1/2} \bigl(u_{\alpha+1} - u_{\alpha} \bigr) }
 \biggr) - \frac{\kappa}{c_b^3} u_1^2,
\label{eq:energy_mcc}
\end{align}
with $E_{\alpha}=\frac{ h_\alpha u_\alpha^2}{2}+\frac{ g  (z_{\alpha+1/2}^2 - z_{\alpha-1/2}^2)}{2}
= h_\alpha (\frac{ u_\alpha^2}{2} +g z_\alpha)$.

In~\eqref{eq:energy_mcc}, we use the convention
\begin{equation}
u_{0} = u_{1},\qquad  u_{N+1} = u_N.
\label{eq:convention2}
\end{equation}
\label{prop:mc_NS}
\end{proposition}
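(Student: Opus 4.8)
The proof follows the template already used for the Euler system in Sections~\ref{sec:av_euler} and~\ref{sec:euler_mc} and in the proof of Proposition~\ref{prop:prop_ad}; the only genuinely new work concerns the viscous terms, both in the derivation of the system and in the energy balance.

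\textbf{Deriving the system.} I would apply the layer average $\avg{\cdot}_\alpha$ to~\eqref{eq:NS_2d1_mod}--\eqref{eq:NS_2d2_mod}. Equation~\eqref{eq:NS_2d1_mod} is just the divergence-free condition, so its treatment is unchanged from Section~\ref{sec:av_euler} and Proposition~\ref{prop:w_alpha}, giving~\eqref{eq:NS_avz_111} and~\eqref{eq:NS_avz_444} verbatim (nothing viscous enters the mass equation or the definition of $w_\alpha$). In~\eqref{eq:NS_2d2_mod} the inviscid and pressure part reproduces, under the closure~\eqref{eq:close1} and the pressure definitions~\eqref{eq:palpha}, the left-hand side and the first line of~\eqref{eq:NS_avz_222} in exactly the way~\eqref{eq:eq4_bis} was obtained from~\eqref{eq:eq4} via~\eqref{eq:pres_interf}. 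The new contribution is the layer average $V_\alpha$ of the viscous right-hand side, already decomposed in the preceding subsection into an $x$-derivative of a layer integral plus interface terms at $z_{\alpha\pm1/2}$. Into that decomposition I would substitute the quadratures~\eqref{eq:sigma_xz} and~\eqref{eq:zsigma_xz}, observe that the terms $\frac{\partial z_{\alpha\pm1/2}}{\partial x}\frac{\partial}{\partial x}\int_{z_{\alpha\pm1/2}}^{\eta}\Sigma_{zx}\,dz$ cancel between the ``$x$-derivative of the integral'' part and the interface part (this is precisely the content of the two displayed identities that close that subsection), and regroup $\Sigma_{xz,\alpha+1/2}-\frac{\partial z_{\alpha+1/2}}{\partial x}(\Sigma_{xx,\alpha+1/2}+\frac{\partial z_{\alpha+1/2}}{\partial x}\Sigma_{zx,\alpha+1/2}-\Sigma_{zz,\alpha+1/2})$ into $\sigma_{\alpha+1/2}$ via~\eqref{eq:petitsigma_xz_m}. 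What remains is exactly the last three lines of~\eqref{eq:NS_avz_222}, and at $\alpha\in\{0,N\}$ one uses $\sigma_{N+1/2}=0$ at the surface and $\sigma_{1/2}=\frac{\kappa}{c_b^3}u_1$ from~\eqref{eq:coulom_layer1}.

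\textbf{Energy balance.} As in the proof of Proposition~\ref{prop:prop_ad}, I would multiply~\eqref{eq:NS_avz_222} by $u_\alpha$ and use~\eqref{eq:NS_avz_111} layer-wise. The inviscid and pressure contributions are treated exactly as there, producing $\frac{\partial}{\partial t}E_\alpha+\frac{\partial}{\partial x}(u_\alpha(E_\alpha+\frac g2 h_\alpha H))+gh_\alpha u_\alpha\frac{\partial z_b}{\partial x}$ together with the mass-exchange terms, which after summation over $\alpha$ combine as in~\eqref{eq:energy_av_fin_mc}--\eqref{eq:energy_glo}. Each viscous term in divergence form contributes, after the product rule, a flux $\frac{\partial}{\partial x}(u_\alpha[\cdots])$ and a dissipation $-[\cdots]\frac{\partial u_\alpha}{\partial x}$: the $h_\alpha(\Sigma_{xx,\alpha}-\Sigma_{zz,\alpha})$ term yields the dissipation $\frac{\partial u_\alpha}{\partial x}h_\alpha(\Sigma_{xx,\alpha}-\Sigma_{zz,\alpha})$ and the matching flux in~\eqref{eq:energy_mcc}; the term $\frac{\partial^2}{\partial x^2}(h_\alpha z_\alpha\Sigma_{zx,\alpha})$ and the two sums $z_{\alpha\pm1/2}\frac{\partial^2}{\partial x^2}\sum_j h_j\Sigma_{zx,j}$ must be handled together, and a summation by parts over $\alpha$ (together with $x$-integrations by parts) turns them into the flux $\sum_\alpha u_\alpha h_\alpha\frac{\partial}{\partial x}(\frac12 h_\alpha\Sigma_{zx,\alpha}+\sum_{j>\alpha}h_j\Sigma_{zx,j})+\sum_\alpha u_\alpha\frac{\partial z_\alpha}{\partial x}h_\alpha\Sigma_{zx,\alpha}$ plus the dissipation $-\sum_\alpha h_\alpha\Sigma_{zx,\alpha}(\frac{\partial w_\alpha}{\partial x}+\frac{\partial z_\alpha}{\partial x}\frac{\partial u_\alpha}{\partial x})$, where it is the relation~\eqref{eq:NS_avz_444} (equivalently~\eqref{eq:w_alpha}) between $w_\alpha$ and the $\frac{\partial}{\partial x}(h_j u_j)$ that lets one recognize the combination $\frac{\partial w_\alpha}{\partial x}$. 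Finally the interface terms: $\sum_\alpha u_\alpha(\sigma_{\alpha+1/2}-\sigma_{\alpha-1/2})$ is reorganized by Abel summation into $\sum_\alpha\sigma_{\alpha+1/2}(u_\alpha-u_{\alpha+1})+\sigma_{N+1/2}u_N-\sigma_{1/2}u_1$, and using $\sigma_{N+1/2}=0$, $\sigma_{1/2}=\frac{\kappa}{c_b^3}u_1$ and the convention~\eqref{eq:convention2} this becomes $-\sum_\alpha\sigma_{\alpha+1/2}(u_{\alpha+1}-u_\alpha)-\frac{\kappa}{c_b^3}u_1^2$, i.e. the last two groups on the right of~\eqref{eq:energy_mcc}.

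\textbf{Main obstacle.} The delicate part is the second-order term $\frac{\partial^2}{\partial x^2}\int_z^\eta\Sigma_{zx}$: it must be tracked consistently through the Leibniz rule in the layer-averaging, where the spurious interface contributions have to be seen to cancel in pairs, and then, after multiplication by $u_\alpha$, through two nested summations by parts --- one over the inner index $j$ in $\sum_{j\geq\alpha+1}h_j\Sigma_{zx,j}$ and one over the layer index $\alpha$ --- so as to match exactly the flux/dissipation split of~\eqref{eq:energy_mcc}. Keeping straight the ``$1/2$'' coefficients (from $z_{\alpha\pm1/2}$ versus $z_\alpha$, and from the $h_\alpha/2$ inside $p_\alpha$) and which interface terms survive at $\alpha\in\{0,N\}$ is where mistakes are easiest; everything else is a direct transcription of the Euler arguments of Sections~\ref{sec:av_euler}--\ref{sec:euler_mc}.
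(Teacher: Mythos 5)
Your proposal is correct and follows essentially the same route as the paper: the mass equation and the definition of $w_\alpha$ are taken over unchanged from the Euler case, the viscous average $V_\alpha$ is treated with the decomposition and quadratures \eqref{eq:sigma_xz}--\eqref{eq:zsigma_xz}, the interface remainders are grouped into $\sigma_{\alpha\pm1/2}$ via \eqref{eq:petitsigma_xz_m}, and the energy balance is obtained by multiplying by $u_\alpha$, recognizing $\partial w_\alpha/\partial x$ through \eqref{eq:NS_avz_444} (the paper does this with the auxiliary $k_\alpha$, $\tilde w_{\alpha+1/2}$ and a telescoping sum over $\alpha$, which is the precise form of your ``summation by parts over $\alpha$''), and Abel summation of the $\sigma$ terms with $\sigma_{N+1/2}=0$, $\sigma_{1/2}=\frac{\kappa}{c_b^3}u_1$. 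No gaps worth noting.
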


Before to give the proof of prop. \ref{prop:mc_NS}, we make few comments concerning the layer-averaging of the Cauchy stress tensor components.

\begin{remark}
Since the expression of the components of the Cauchy stress tensor are
not specified, we are not able to precise all the terms in
Eq.~\eqref{eq:energy_mcc} and we only intend to demonstrate that the
energy balance~\eqref{eq:energy_mcc} is consistent with~\eqref{eq:energy_eq_mod}. 
The nonnegativity of the right hand side of ~\eqref{eq:energy_mcc}
has then to be verified when specifying the rheological model (as it is done below in the Newtonian case).
\end{remark}

\begin{remark}
After injecting the definition \eqref{eq:petitsigma_xz_m} of $\sigma_{\alpha+1/2}$ 
in \eqref{eq:energy_mcc}, it appears that the following terms in the right hand side of~\eqref{eq:energy_mcc}
$$
- \sum_{\alpha=1}^N  \biggl( \frac{\partial
    u_\alpha}{\partial x} h_\alpha \left( \Sigma_{xx,\alpha} -
  \Sigma_{zz,\alpha} \right) 
  -  \frac{\partial  z_{\alpha+1/2}}{\partial x} \left( \Sigma_{xx,\alpha+1/2} - \Sigma_{zz,\alpha+1/2} \right)  
  \bigl(u_{\alpha+1} - u_{\alpha} \bigr) \biggr) 
$$
account for a layer-averaging of
$$- \int_{z_b}^\eta \frac{\partial u}{\partial x} ( \Sigma_{xx} -  \Sigma_{zz} ) dz,$$
appearing in the right hand side of ~\eqref{eq:energy_eq_mod}. 
Likewise, the term
\begin{equation}
-\int_{z_b}^\eta  \biggl( \frac{\partial u}{\partial z} \Sigma_{xz}
+   \frac{\partial w}{\partial x} \Sigma_{zx}  \biggr)  dz ,
\label{eq:energ_term1}
\end{equation}
in the right hand side of~\eqref{eq:energy_eq_mod} is discretized by
\begin{equation}
- \sum_{\alpha=1}^N \left(
\Sigma_{xz,\alpha+1/2} \bigl(u_{\alpha+1} - u_{\alpha} \bigr) 
+  h_\alpha \Sigma_{zx,\alpha}  \biggl(  \frac{\partial w_\alpha}{\partial x} + 
\frac{\partial z_\alpha}{\partial x}\frac{\partial u_\alpha}{\partial x} \biggr) 
- \biggl( \frac{\partial  z_{\alpha+1/2}}{\partial x} \biggr)^2 \Sigma_{zx,\alpha+1/2} \right)
\label{eq:energ_term2}
\end{equation}
in the layer-average context of Eq.~\eqref{eq:energy_mcc}.
A similar comparison can be done for the viscous terms involved in the left hand side 
of the two energy balances \eqref{eq:energy_eq_mod} and \eqref{eq:energy_mcc}.
\end{remark}

\begin{proof}[Proof of proposition~\ref{prop:mc_NS}]
The derivation of Eqs.~\eqref{eq:NS_avz_111} and~\eqref{eq:NS_avz_444}
is similar to what has been done to obtain the layer-averaged Euler system~\eqref{eq:H_bis}-\eqref{eq:eq6_bis}. 
Only the treatment of
the viscous terms $V_\alpha$ has to be specified.

Using the definitions~\eqref{eq:sigma_xz},\eqref{eq:zsigma_xz},
\eqref{eq:petitsigma_xz_m}, for $\alpha=\{1,N\}$ using the mimic of the boundary conditions 
it comes
\begin{eqnarray*}
V_\alpha 
& \approx & \frac{\partial}{\partial x}\left( h_\alpha\Sigma_{xx,\alpha} -
  h_\alpha\Sigma_{zz,\alpha} + \frac{\partial }{\partial x} \int_{z_{\alpha-1/2}}^{z_{\alpha+1/2}} z\Sigma_{zx} dz\right) \\
& & + z_{\alpha+1/2} \frac{\partial^2}{\partial x^2}
\sum_{j=\alpha+1}^N h_j\Sigma_{zx,j} - z_{\alpha-1/2}
\frac{\partial^2}{\partial x^2} \sum_{j=\alpha}^N h_j\Sigma_{zx,j}\\
& & { + \sigma_{\alpha+1/2} - \sigma_{\alpha-1/2}   }.
\end{eqnarray*}
The approximation \eqref{eq:zsigma_xz} gives
\begin{eqnarray*}
V_\alpha 
& \approx & R_\alpha + { \sigma_{\alpha+1/2} - \sigma_{\alpha-1/2}   } \\
& = & \frac{\partial}{\partial x}\left( h_\alpha\Sigma_{xx,\alpha} -
  h_\alpha\Sigma_{zz,\alpha} + \frac{\partial }{\partial x}
  \Bigl(\frac{z_{\alpha+1/2}^2 - z_{\alpha-1/2}^2}{2} \Sigma_{zx,\alpha}\Bigr)\right) \\
& & + z_{\alpha+1/2} \frac{\partial^2}{\partial x^2}
\sum_{j=\alpha+1}^N h_j\Sigma_{zx,j} - z_{\alpha-1/2}
\frac{\partial^2}{\partial x^2} \sum_{j=\alpha}^N h_j\Sigma_{zx,j}\\
& &  { + \sigma_{\alpha+1/2} - \sigma_{\alpha-1/2}   }.
\end{eqnarray*}
For the energy balance we write
\begin{eqnarray}
R_\alpha u_\alpha & = & \frac{\partial}{\partial x} \biggl( u_\alpha h_\alpha\Bigl( \Sigma_{xx,\alpha} - \Sigma_{zz,\alpha} \Bigr) +
u_\alpha \frac{\partial}{\partial x} \Bigl(\frac{z_{\alpha+1/2}^2 - z_{\alpha-1/2}^2}{2} \Sigma_{zx,\alpha}\Bigr) \nonumber\\
& &+ z_{\alpha+1/2} u_\alpha \frac{\partial}{\partial x}
\sum_{j=\alpha+1}^N h_j\Sigma_{zx,j} - z_{\alpha-1/2} u_\alpha \frac{\partial}{\partial x}
\sum_{j=\alpha}^N h_j\Sigma_{zx,j}\biggr)\nonumber\\
& &  - h_\alpha\Bigl( \Sigma_{xx,\alpha} - \Sigma_{zz,\alpha} \Bigr)
\frac{\partial u_\alpha}{\partial x} - \frac{\partial u_\alpha}{\partial x}  
\frac{\partial}{\partial x} \Bigl(\frac{z_{\alpha+1/2}^2 - z_{\alpha-1/2}^2}{2} \Sigma_{zx,\alpha}\Bigr) \nonumber\\
& &  - \frac{\partial}{\partial
  x} (z_{\alpha+1/2} u_\alpha) \frac{\partial}{\partial x}
\sum_{j=\alpha+1}^N h_j\Sigma_{zx,j} + \frac{\partial}{\partial
  x} (z_{\alpha-1/2} u_\alpha) \frac{\partial}{\partial x}
\sum_{j=\alpha}^N h_j\Sigma_{zx,j}.
\label{eq:r_alpha}
\end{eqnarray}
Notice that, using an integration by part, it comes that the three terms
\begin{multline*}
\frac{\partial}{\partial x} \biggl( 
u_\alpha \frac{\partial}{\partial x} \Bigl(\frac{z_{\alpha+1/2}^2 - z_{\alpha-1/2}^2}{2} \Sigma_{zx,\alpha}\Bigr) 
+ z_{\alpha+1/2} u_\alpha \frac{\partial}{\partial x}
\sum_{j=\alpha+1}^N h_j\Sigma_{zx,j} \\
- z_{\alpha-1/2} u_\alpha \frac{\partial}{\partial x}
\sum_{j=\alpha}^N h_j\Sigma_{zx,j}\biggr),
\end{multline*}
appearing in~Eq.~\eqref{eq:r_alpha} are a discretization of the
quantity
$$\frac{\partial}{\partial x} \biggl( u_\alpha\int_{z_{\alpha-1/2}}^{z_{\alpha+1/2}} \frac{\partial}{\partial
      x}\int_{z}^\eta \Sigma_{zx} dz_1 dz \biggr),$$
in the energy balance Eq.~\eqref{eq:energy_mcc}.

We can see that
\begin{align}
&\frac{\partial}{\partial x}  \biggl( u_\alpha\biggl( z_{\alpha+1/2}  \frac{\partial}{\partial x}
\sum_{j=\alpha+1}^N h_j\Sigma_{zx,j} - z_{\alpha-1/2} \frac{\partial}{\partial x}
\sum_{j=\alpha}^N h_j\Sigma_{zx,j}\biggr) \biggr)\nonumber\\
&= \frac{\partial}{\partial x} \biggl(  u_\alpha\biggl( (h_\alpha +z_{\alpha-1/2})  \frac{\partial}{\partial x}
\sum_{j=\alpha+1}^N h_j\Sigma_{zx,j} - z_{\alpha-1/2} \frac{\partial}{\partial x}
\sum_{j=\alpha+1}^N h_j\Sigma_{zx,j}- z_{\alpha-1/2} \frac{\partial}{\partial x}
 (h_\alpha\Sigma_{zx,\alpha })\biggr) \biggr)\nonumber\\
 &= \frac{\partial}{\partial x}  \biggl( u_\alpha\biggl( h_\alpha   \frac{\partial}{\partial x}
\sum_{j=\alpha+1}^N h_j\Sigma_{zx,j}- z_{\alpha-1/2} \frac{\partial}{\partial x}
( h_\alpha\Sigma_{zx,\alpha })\biggr)\biggr) \nonumber\\
 &= \frac{\partial}{\partial x}  \biggl( u_\alpha\biggl( h_\alpha   \frac{\partial}{\partial x} \biggl(
\sum_{j=\alpha+1}^N h_j\Sigma_{zx,j} + \frac{h_\alpha}{2} \Sigma_{zx,\alpha }\biggr)
- z_{\alpha} \frac{\partial}{\partial x}
( h_\alpha\Sigma_{zx,\alpha })\biggr)\biggr);
\end{align}
and
\begin{equation}
\frac{\partial}{\partial x}  \biggl(u_\alpha \frac{\partial}{\partial x} 
\Bigl(\frac{z_{\alpha+1/2}^2 - z_{\alpha-1/2}^2}{2} \Sigma_{zx,\alpha}\Bigr)\biggr) \\
= \frac{\partial}{\partial x}  \biggl(u_\alpha z_\alpha \frac{\partial}{\partial x}\Bigl(  h_\alpha \Sigma_{zx,\alpha } \Bigr)
+ u_\alpha h_\alpha \Sigma_{zx,\alpha}  \frac{\partial z_\alpha}{\partial x} \biggr) 
\end{equation}

Denoting $\tilde R_\alpha u_\alpha$ the last three terms
in Eq.~\eqref{eq:r_alpha}, we write
\begin{eqnarray*}
\tilde R_\alpha u_\alpha & = & - \frac{\partial}{\partial x}\biggl( 
\frac{z_{\alpha+1/2}^2 - z_{\alpha-1/2}^2}{2}\frac{\partial u_\alpha}{\partial x}  
\Sigma_{zx,\alpha}\biggr)  +
\frac{z_{\alpha+1/2}^2 - z_{\alpha-1/2}^2}{2} \frac{\partial^2 u_\alpha}{\partial x^2}\Sigma_{zx,\alpha}\\
& & - \frac{\partial}{\partial
  x} (z_{\alpha+1/2} u_\alpha) \frac{\partial}{\partial x}
\sum_{j=\alpha+1}^N h_j\Sigma_{zx,j} + \frac{\partial}{\partial
  x} (z_{\alpha-1/2} u_\alpha) \frac{\partial}{\partial x}
\sum_{j=\alpha}^N h_j\Sigma_{zx,j}\\
& = & \frac{\partial}{\partial x}\biggl( 
\bigl( h_\alpha w_\alpha - h_\alpha k_\alpha\bigr)
\Sigma_{zx,\alpha}\biggr)  +
\frac{z_{\alpha+1/2}^2 - z_{\alpha-1/2}^2}{2} \frac{\partial^2 u_\alpha}{\partial x^2}\Sigma_{zx,\alpha}\\
& & - \frac{\partial}{\partial
  x} (z_{\alpha+1/2} u_\alpha) \frac{\partial}{\partial x}
\sum_{j=\alpha+1}^N h_j\Sigma_{zx,j} + \frac{\partial}{\partial
  x} (z_{\alpha-1/2} u_\alpha) \frac{\partial}{\partial x}
\sum_{j=\alpha}^N h_j\Sigma_{zx,j},
\end{eqnarray*}
where~\eqref{eq:mean_w} has been used. And simple manipulations give
\begin{eqnarray*}
\tilde R_\alpha u_\alpha & = & \frac{\partial}{\partial x}\biggl(  w_\alpha h_\alpha
\Sigma_{zx,\alpha}\biggr) -  \biggl( \frac{\partial w_\alpha} {\partial
  x} + \frac{\partial z_\alpha}{\partial x}\frac{\partial
    u_\alpha}{\partial x}\biggr) h_\alpha\Sigma_{zx,\alpha} - 
k_\alpha \frac{\partial} {\partial x}(h_\alpha\Sigma_{zx,\alpha})\\
& & - \frac{\partial}{\partial
  x} (z_{\alpha+1/2} u_\alpha) \frac{\partial}{\partial x}
\sum_{j=\alpha+1}^N h_j\Sigma_{zx,j} + \frac{\partial}{\partial
  x} (z_{\alpha-1/2} u_\alpha) \frac{\partial}{\partial x}
\sum_{j=\alpha}^N h_j\Sigma_{zx,j}\\
& = & \frac{\partial}{\partial x}\biggl(  w_\alpha h_\alpha
\Sigma_{zx,\alpha}\biggr) - \Bigl( \frac{\partial w_\alpha} {\partial
  x} + \frac{\partial z_\alpha}{\partial x}\frac{\partial
    u_\alpha}{\partial x} \Bigr)  h_\alpha \Sigma_{zx,\alpha} \\
& & + \tilde{w}_{\alpha+1/2} \frac{\partial}{\partial x}
\sum_{j=\alpha+1}^N h_j\Sigma_{zx,j} - \tilde{w}_{\alpha-1/2}  \frac{\partial}{\partial x}
\sum_{j=\alpha}^N h_j\Sigma_{zx,j},
\end{eqnarray*}
with $\tilde{w}_{\alpha+1/2}$ defined by
$$\tilde{w}_{\alpha+1/2} = k_\alpha- \frac{\partial ( z_{\alpha+1/2}
  u_\alpha)}{\partial x}=k_{\alpha+1}- \frac{\partial ( z_{\alpha+1/2}
  u_{\alpha+1})}{\partial x}.$$
The two last terms of $\tilde R_\alpha u_\alpha$ give a telescoping series and vanish when summing since $\tilde{w}_{1/2} =0$ 
and $\sum_{j=\alpha+1}^N h_j\Sigma_{zx,j}$ vanish when $\alpha=N$.

Finally, the quantity
$$\sum_{\alpha=1}^N V_\alpha u_\alpha,$$
gives the expression involving of the terms related to the Cauchy
stress tensor in~\eqref{eq:energy_mcc} proving the result.
\end{proof}

\subsection{Newtonian fluids}

When considering a Newtonian fluid, the chosen form of the viscosity tensor is
\begin{eqnarray}
&\displaystyle \Sigma_{xx} = 2 \mu \frac{\partial u}{\partial x},   &\Sigma_{xz} = 
\mu \bigl( \frac{\partial u}{\partial z} + \frac{\partial w}{\partial x} \bigr),\label{eq:visco1}\\ 
&\displaystyle \Sigma_{zz} = 2 \mu \frac{\partial w}{\partial z},
&\Sigma_{zx} =\mu \bigl( \frac{\partial u}{\partial z} + \frac{\partial w}{\partial x}\bigr), 
\label{eq:visco2}
\end{eqnarray}
where $\mu$ is a dynamic  viscosity coefficient.

When considering the fluid rheology is given by~\eqref{eq:visco1}-\eqref{eq:visco2},
thus leading to $\Sigma_{zz} = - \Sigma_{xx}$ and $\Sigma_{xz} = \Sigma_{zx}$, 
prop.~\ref{prop:mc_NS} becomes:
\begin{lemma}
The layer-averaging applied to the Navier-Stokes system for a
newtonian fluid gives
\begin{eqnarray}
& & \frac{\partial}{\partial t} \sum_{j=1}^N h_j +
\frac{\partial}{\partial x} \sum_{j=1}^N h_j u_j = 0,\label{eq:NS_newt_1}\\
& & \frac{\partial}{\partial t} (h_\alpha u_\alpha) +
\frac{\partial}{\partial x}  \left( h_\alpha u_\alpha^2 + \frac{g}{2}h_\alpha H\right)
= -gh_{\alpha}\frac{\partial z_b}{\partial x} +
u_{\alpha+1/2}G_{\alpha+1/2} - u_{\alpha-1/2}
G_{\alpha-1/2}\nonumber\\
& & \qquad + \frac{\partial}{\partial x}\left(
  2 h_\alpha  \Sigma_{xx,\alpha}  +
  \frac{\partial}{\partial x} \Bigl( h_\alpha z_\alpha \Sigma_{zx,\alpha} \Bigr)\right) \nonumber\\
& & \qquad + z_{\alpha+1/2} \frac{\partial^2}{\partial x^2}
\sum_{j=\alpha+1}^N h_j\Sigma_{zx,j} - z_{\alpha-1/2}
\frac{\partial^2}{\partial x^2} \sum_{j=\alpha}^N h_j\Sigma_{zx,j} \nonumber\\
& & \qquad + {\sigma_{\alpha+1/2} - \sigma_{\alpha-1/2} } \qquad ,\label{eq:NS_newt_2}\\
& & w_\alpha = -\frac{1}{2}\frac{\partial (h_\alpha u_\alpha)}{\partial
  x} - \sum_{j=1}^{\alpha-1}
\frac{\partial (h_j u_j)}{\partial x}
+ u_\alpha \frac{\partial z_\alpha}{\partial x},\qquad \alpha=1,\dots,N \label{eq:NS_newt_3}
\end{eqnarray}
where exchange terms $G_{\alpha\pm1/2}$ are still given by~\eqref{eq:Qalphabis}
and the interface terms $\sigma_{\alpha\pm1/2}$ defined by \eqref{eq:petitsigma_xz_m}
are here reduced to
\begin{eqnarray}
\sigma_{\alpha+1/2} & = & 
 - 2 \Sigma_{xx,\alpha+1/2} \frac{\partial  z_{\alpha+1/2}}{\partial x}
 + \Sigma_{zx,\alpha+1/2} \left(   1 - \Bigl( \frac{\partial z_{\alpha+1/2}}{\partial x}\Bigr)^2 \right).
\label{eq:sigma_newt}
\end{eqnarray}
For smooth solutions, we obtain the balance
\begin{align}
&\frac{\partial }{\partial t} \left( \sum_{\alpha=1}^N
  E_{\alpha}\right) +  \frac{\partial}{\partial x}
\Biggl(\sum_{\alpha=1}^N u_\alpha\biggl( E_{\alpha}+
    \frac{g}{2} h_\alpha H - 2 h_\alpha \Sigma_{xx,\alpha} \nonumber\\
& - \Bigl(\frac{\partial z_{\alpha} }{\partial x} h_\alpha \Sigma_{zx,\alpha}  + 
 h_{\alpha} \frac{\partial}{\partial x}\bigl(\frac 1 2 h_\alpha \Sigma_{zx,\alpha}+\sum_{j=\alpha+1}^N h_j\Sigma_{zx,j}\bigr)\Bigr) \biggr) 
 - \sum_{\alpha=1}^N  w_\alpha h_\alpha \Sigma_{zx,\alpha} \Biggr)  \nonumber\\
=& - \sum_{\alpha=1}^N  \biggl( \frac{\partial
    u_\alpha}{\partial x} 2 h_\alpha \Sigma_{xx,\alpha}  
    +  \Bigl(\frac{\partial
  w_\alpha}{\partial x} + \frac{\partial z_\alpha}{\partial x}\frac{\partial
    u_\alpha}{\partial x}\Bigr)h_\alpha \Sigma_{zx,\alpha} 
    + { \sigma_{\alpha+1/2} \bigl(u_{\alpha+1} - u_{\alpha} \bigr) }
 \biggr) - \frac{\kappa}{c_b^3} u_1^2,
\label{eq:energy_mcc_newt}
\end{align}
\label{lem:mcc_newt} \end{lemma}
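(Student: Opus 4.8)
The plan is to obtain Lemma~\ref{lem:mcc_newt} as a direct specialization of Proposition~\ref{prop:mc_NS} to the Newtonian constitutive law \eqref{eq:visco1}--\eqref{eq:visco2}. The first step is to record the two algebraic consequences of that law. From the symmetric form of the off-diagonal entries one has $\Sigma_{xz}=\mu\bigl(\frac{\partial u}{\partial z}+\frac{\partial w}{\partial x}\bigr)=\Sigma_{zx}$ by inspection. For the diagonal entries, the incompressibility constraint \eqref{eq:conti_w} used to build the piecewise-affine reconstruction $\hat w$ of Proposition~\ref{prop:w_alpha} gives, via \eqref{eq:w_hat}, $\frac{\partial\hat w}{\partial z}=-\frac{\partial u_\alpha}{\partial x}$ on each layer $L_\alpha$; hence $\Sigma_{zz}=2\mu\frac{\partial\hat w}{\partial z}=-2\mu\frac{\partial u_\alpha}{\partial x}=-\Sigma_{xx}$ within layer $\alpha$. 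Consequently the layer-averaged stress components inherited through \eqref{eq:sigma_xz}--\eqref{eq:zsigma_xz} satisfy $\Sigma_{zz,\alpha}=-\Sigma_{xx,\alpha}$, and the interface values through \eqref{eq:sigma_xz_inter} satisfy $\Sigma_{zz,\alpha+1/2}=-\Sigma_{xx,\alpha+1/2}$ and $\Sigma_{xz,\alpha+1/2}=\Sigma_{zx,\alpha+1/2}$.

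Second, I would substitute these identities into the general system \eqref{eq:NS_avz_111}--\eqref{eq:NS_avz_444}. Only \eqref{eq:NS_avz_222} is affected: the combination $h_\alpha\Sigma_{xx,\alpha}-h_\alpha\Sigma_{zz,\alpha}$ collapses to $2h_\alpha\Sigma_{xx,\alpha}$, which gives \eqref{eq:NS_newt_2}, while \eqref{eq:NS_newt_1} and \eqref{eq:NS_newt_3} are literally \eqref{eq:NS_avz_111} and \eqref{eq:NS_avz_444}. Likewise in the definition \eqref{eq:petitsigma_xz_m} of $\sigma_{\alpha+1/2}$, replacing $\Sigma_{zz,\alpha+1/2}$ by $-\Sigma_{xx,\alpha+1/2}$ turns $\Sigma_{xx,\alpha+1/2}-\Sigma_{zz,\alpha+1/2}$ into $2\Sigma_{xx,\alpha+1/2}$ and leaves $\Sigma_{xz,\alpha+1/2}=\Sigma_{zx,\alpha+1/2}$; collecting the terms multiplying $\frac{\partial z_{\alpha+1/2}}{\partial x}$ and the coefficient $1-\bigl(\frac{\partial z_{\alpha+1/2}}{\partial x}\bigr)^2$ of $\Sigma_{zx,\alpha+1/2}$ yields exactly \eqref{eq:sigma_newt}.

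Third, the energy balance \eqref{eq:energy_mcc_newt} follows from \eqref{eq:energy_mcc} by the same substitutions: every occurrence of $\Sigma_{xx,\alpha}-\Sigma_{zz,\alpha}$ (both in the flux under $\frac{\partial}{\partial x}$ and in the dissipation sum on the right-hand side) becomes $2\Sigma_{xx,\alpha}$, the $\Sigma_{zx,\alpha}$ terms and the bottom friction term $\frac{\kappa}{c_b^3}u_1^2$ are untouched, and $\sigma_{\alpha+1/2}$ is then the reduced quantity \eqref{eq:sigma_newt}. No new telescoping argument is needed, since those manipulations were already carried out in the proof of Proposition~\ref{prop:mc_NS}; here one only rewrites the resulting expression.

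I expect the only delicate point to be the diagonal identity in the first step, namely justifying $\Sigma_{zz,\alpha}=-\Sigma_{xx,\alpha}$ at the discrete level. One must check that the closures \eqref{eq:sigma_xz} and \eqref{eq:zsigma_xz} are applied to $\Sigma_{zz}$ evaluated on the reconstructed vertical velocity $\hat w$ of Proposition~\ref{prop:w_alpha} — affine in $z$ with slope $-\frac{\partial u_\alpha}{\partial x}$ — rather than on some other discretization of $w$, so that $\frac{\partial\hat w}{\partial z}$ is genuinely constant and equal to $-\frac{\partial u_\alpha}{\partial x}$ on $L_\alpha$; granting this consistency, the whole statement is a term-by-term rewriting of Proposition~\ref{prop:mc_NS} and presents no further obstacle.
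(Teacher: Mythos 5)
Your proposal is correct and matches the paper's treatment: the paper gives no separate proof of Lemma~\ref{lem:mcc_newt}, presenting it as the direct specialization of Proposition~\ref{prop:mc_NS} obtained by substituting $\Sigma_{zz}=-\Sigma_{xx}$ (from incompressibility) and $\Sigma_{xz}=\Sigma_{zx}$ into \eqref{eq:NS_avz_222}, \eqref{eq:petitsigma_xz_m} and \eqref{eq:energy_mcc}, exactly as you do. Your ``delicate point'' about enforcing $\Sigma_{zz,\alpha}=-\Sigma_{xx,\alpha}$ at the discrete level is legitimate but is resolved later in the paper, where the discrete closures \eqref{eq:sigma_xx_NewtonInter} and \eqref{eq:sigma_xx_NewtonLayer} impose this identity by definition.
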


If we look at the energy balance for the continuous setting \eqref{eq:energy_eq_mod},
we have, by using \eqref{eq:visco1}-\eqref{eq:visco2}, the following non-positive right hand side,
\begin{equation}
- \int_{z_b}^\eta \dfrac{1}{\mu}  \Bigl( \Sigma_{xx}^2 + \Sigma_{zx}^2 \Bigr) dz - \frac{\kappa}{c_b^3} u_b^2,
\label{eq:energy_newt_RHS}
\end{equation}
whereas, after including \eqref{eq:sigma_newt} in \eqref{eq:energy_mcc_newt}, the right hand side of the discrete energy balance
of the layer-averaged model leads to
\begin{align}
R_E=& - \sum_{\alpha=1}^N  \biggl( ~ 
2 \frac{\partial u_\alpha}{\partial x}  h_\alpha \Sigma_{xx,\alpha} 
- 2 \Sigma_{xx,\alpha+1/2} (u_{\alpha+1} - u_{\alpha}) \frac{\partial  z_{\alpha+1/2}}{\partial x} 
+ \Bigl(\frac{\partial  w_\alpha}{\partial x} + 
\frac{\partial z_\alpha}{\partial x}\frac{\partial u_\alpha}{\partial x}\Bigr)h_\alpha \Sigma_{zx,\alpha}   \nonumber\\
&  +  \Sigma_{zx,\alpha+1/2} (u_{\alpha+1} - u_{\alpha})  
\left(   1 - \Bigl( \frac{\partial z_{\alpha+1/2}}{\partial x}\Bigr)^2 \right)   ~ \biggr) 
- \frac{\kappa}{c_b^3} u_1^2 \ .
\label{eq:energy_mcc_newt_RHSDvp}
\end{align}
The aim of the next proposition is to mimic \eqref{eq:energy_mcc_newt_RHSInterf}.

\begin{proposition}
The layer-averaging, given in lemma \ref{lem:mcc_newt}, is applied to the Navier-Stokes system for a
newtonian fluid with the following consistent expressions of the  rheology terms at the interface $\alpha+1/2$,
\begin{eqnarray}
h_{\alpha+1/2}  \Sigma_{xx,\alpha+1/2} & = &  - h_{\alpha+1/2}  \Sigma_{zz,\alpha+1/2}  \nonumber \\
& = & 2\mu  ~ \left( ~ \dfrac{1}{2}  \biggl(h_\alpha  \frac{\partial u_\alpha}{\partial x}+h_{\alpha+1}  \frac{\partial u_{\alpha+1}}{\partial x}\biggr) - 
  \frac{\partial z_{\alpha+1/2}}{\partial x}(u_{\alpha+1} - u_\alpha) ~ \right) \ ,
 \label{eq:sigma_xx_NewtonInter}
\end{eqnarray}
\begin{eqnarray}
h_{\alpha+1/2}  \Sigma_{zx,\alpha+1/2} & =  & h_{\alpha+1/2}  \Sigma_{xz,\alpha+1/2} \nonumber  \\
 & = &  \mu ~ \Biggl( ~ \dfrac{1}{2}  \biggl( h_\alpha (
\frac{\partial w_\alpha}{\partial x} +  \frac{\partial z_\alpha}{\partial x}\frac{\partial
    u_\alpha}{\partial x} ) + h_{\alpha+1}(\frac{\partial w_{\alpha+1}}{\partial x} +  \frac{\partial z_{\alpha+1}}{\partial x}\frac{\partial
    u_{\alpha+1} }{\partial x}) \biggr) \nonumber\\
    & &+ { (u_{\alpha+1} - u_\alpha) } 
    \left(   1 - \Bigl( \frac{\partial z_{\alpha+1/2}}{\partial x}\Bigr)^2 \right)    ~ \Biggr) \ .
\label{eq:sigma_zx_NewtonInter}
\end{eqnarray}
and, since the rheology terms are more related to elliptic than hyperbolic type behaviour, we used the centred approximation
for the  rheology terms at the layers $\alpha$,
\begin{equation}
\Sigma_{ab,\alpha} = \dfrac{\Sigma_{ab,{\alpha+1/2}} + \Sigma_{ab,{\alpha-1/2}} }{2},
\label{eq:sigma_ab_layer_moy}
\end{equation}
with $(a,b) \in (x,z)^2$. Then we obtain an energy inequality since the right hand side of the discrete energy balance
of the layer-averaged model, defined by \eqref{eq:energy_mcc_newt_RHSDvp}, leads here to
\begin{equation}
R_E= - \sum_{\alpha=0}^N \dfrac{h_{\alpha+1/2}}{\mu} \biggl(  \Sigma_{xx,\alpha+1/2}^2 + \Sigma_{zx,\alpha+1/2}^2 \biggr) - \frac{\kappa}{c_b^3} u_1^2 \ .
\label{eq:energy_mcc_newt_RHSInterf}
\end{equation}
\label{prop:newton_mcc}
\end{proposition}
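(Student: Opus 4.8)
The plan is to establish this by a discrete summation by parts carried out directly on the right hand side $R_E$ in the form~\eqref{eq:energy_mcc_newt_RHSDvp}. First I would substitute the interface rheology values~\eqref{eq:sigma_xx_NewtonInter}--\eqref{eq:sigma_zx_NewtonInter} and the centred layer values~\eqref{eq:sigma_ab_layer_moy} into~\eqref{eq:energy_mcc_newt_RHSDvp}, and then regroup the resulting sum \emph{interface by interface}, so that the contributions of the two layers adjacent to $z_{\alpha+1/2}$, together with the jump term carried by $\sigma_{\alpha+1/2}$, recombine into $\frac{h_{\alpha+1/2}}{\mu}\Sigma_{xx,\alpha+1/2}^2$ and $\frac{h_{\alpha+1/2}}{\mu}\Sigma_{zx,\alpha+1/2}^2$.

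To make this concrete I would fix the notation $a_\alpha = h_\alpha\frac{\partial u_\alpha}{\partial x}$, $b_\alpha = h_\alpha\bigl(\frac{\partial w_\alpha}{\partial x}+\frac{\partial z_\alpha}{\partial x}\frac{\partial u_\alpha}{\partial x}\bigr)$ and $\delta u_{\alpha+1/2} = u_{\alpha+1}-u_\alpha$, in which the definitions~\eqref{eq:sigma_xx_NewtonInter}--\eqref{eq:sigma_zx_NewtonInter} read
\[
\frac{h_{\alpha+1/2}}{\mu}\,\Sigma_{xx,\alpha+1/2}
 = a_\alpha+a_{\alpha+1} - 2\,\frac{\partial z_{\alpha+1/2}}{\partial x}\,\delta u_{\alpha+1/2},
\]
\[
\frac{h_{\alpha+1/2}}{\mu}\,\Sigma_{zx,\alpha+1/2}
 = \tfrac12\bigl(b_\alpha+b_{\alpha+1}\bigr)
 + \Bigl(1-\bigl(\tfrac{\partial z_{\alpha+1/2}}{\partial x}\bigr)^{2}\Bigr)\delta u_{\alpha+1/2},
\]
while~\eqref{eq:sigma_ab_layer_moy} reads $h_\alpha\Sigma_{ab,\alpha} = \tfrac{h_\alpha}{2}\bigl(\Sigma_{ab,\alpha+1/2}+\Sigma_{ab,\alpha-1/2}\bigr)$. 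The role of this centred choice is that the \emph{layer} term $2\frac{\partial u_\alpha}{\partial x}h_\alpha\Sigma_{xx,\alpha}=2a_\alpha\Sigma_{xx,\alpha}$ appearing in~\eqref{eq:energy_mcc_newt_RHSDvp} then contributes $a_\alpha\Sigma_{xx,\alpha+1/2}$ at the interface above layer $\alpha$ and $a_\alpha\Sigma_{xx,\alpha-1/2}$ at the interface below, and likewise $\bigl(\frac{\partial w_\alpha}{\partial x}+\frac{\partial z_\alpha}{\partial x}\frac{\partial u_\alpha}{\partial x}\bigr)h_\alpha\Sigma_{zx,\alpha}=b_\alpha\Sigma_{zx,\alpha}$ contributes $\tfrac12 b_\alpha$ times each of its two bounding interface stresses.

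Collecting, inside the bracket $\sum_{\alpha=1}^N(\cdots)$ of~\eqref{eq:energy_mcc_newt_RHSDvp}, all the terms multiplying a fixed $\Sigma_{xx,\alpha+1/2}$ at an interior interface gives $a_\alpha+a_{\alpha+1}$ from the two neighbouring layers and $-2\frac{\partial z_{\alpha+1/2}}{\partial x}\delta u_{\alpha+1/2}$ from the \emph{interface} term $-2\Sigma_{xx,\alpha+1/2}\,\delta u_{\alpha+1/2}\,\frac{\partial z_{\alpha+1/2}}{\partial x}$; by the first identity above this equals $\frac{h_{\alpha+1/2}}{\mu}\Sigma_{xx,\alpha+1/2}$, so those terms contribute $-\frac{h_{\alpha+1/2}}{\mu}\Sigma_{xx,\alpha+1/2}^2$ to $R_E$. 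The $zx$-terms are handled in exactly the same way: the two layers now give $\tfrac12(b_\alpha+b_{\alpha+1})$ and the interface term gives $\bigl(1-(\frac{\partial z_{\alpha+1/2}}{\partial x})^{2}\bigr)\delta u_{\alpha+1/2}$, which by the second identity recombine into $\frac{h_{\alpha+1/2}}{\mu}\Sigma_{zx,\alpha+1/2}$, contributing $-\frac{h_{\alpha+1/2}}{\mu}\Sigma_{zx,\alpha+1/2}^2$ to $R_E$. The term $-\frac{\kappa}{c_b^3}u_1^2$ being carried through untouched, this yields~\eqref{eq:energy_mcc_newt_RHSInterf}, which is manifestly non-positive and, compared with~\eqref{eq:energy_newt_RHS}, a consistent discretization of the continuous dissipation.

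The step I expect to require the most care is the bookkeeping at the two boundary interfaces $\alpha=0$ and $\alpha=N$, which is precisely what forces the final sum to range over $\alpha=0,\dots,N$ rather than over interior interfaces only. There I would use the conventions~\eqref{eq:convention2}, which make $\delta u_{1/2}=\delta u_{N+1/2}=0$ so that the interface terms drop out, and extend~\eqref{eq:halphademi} by the natural half-layer values $h_{1/2}=z_1-z_b=h_1/2$ and $h_{N+1/2}=\eta-z_N=h_N/2$ (equivalently $h_0=h_{N+1}=0$ in the identities above); one then checks that the coefficient of $\Sigma_{xx,1/2}$, resp.\ $\Sigma_{zx,1/2}$, collected from the single adjacent layer $\alpha=1$ is $a_1$, resp.\ $\tfrac12 b_1$, which still equals $\frac{h_{1/2}}{\mu}\Sigma_{xx,1/2}$, resp.\ $\frac{h_{1/2}}{\mu}\Sigma_{zx,1/2}$, and symmetrically at $\alpha=N$. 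One should also keep in mind that the bottom friction never interferes with this telescoping: it enters only through $\sigma_{1/2}$, which by~\eqref{eq:coulom_layer1} has already been replaced by $\frac{\kappa}{c_b^3}u_1$ and split off as $-\frac{\kappa}{c_b^3}u_1^2$ in~\eqref{eq:energy_mcc_newt_RHSDvp}. Since the interior identity is a one-line consequence of~\eqref{eq:sigma_xx_NewtonInter}--\eqref{eq:sigma_ab_layer_moy}, the real work is in this boundary accounting, together with the correct separation of the sum in~\eqref{eq:energy_mcc_newt_RHSDvp} into its layer part and its interface part.
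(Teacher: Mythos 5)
Your regrouping is exactly the computation the paper compresses into its final sentence (``the energy inequality is obtained by injecting \eqref{eq:sigma_xx_NewtonInter}, \eqref{eq:sigma_zx_NewtonInter} and \eqref{eq:sigma_ab_layer_moy} in \eqref{eq:energy_mcc_newt_RHSDvp}''): with your notations $a_\alpha$, $b_\alpha$, $\delta u_{\alpha+1/2}$, the interface-by-interface collection does give the coefficient $a_\alpha+a_{\alpha+1}-2\frac{\partial z_{\alpha+1/2}}{\partial x}\delta u_{\alpha+1/2}=\frac{h_{\alpha+1/2}}{\mu}\Sigma_{xx,\alpha+1/2}$ and its $zx$ analogue at interior interfaces, and your boundary bookkeeping (conventions \eqref{eq:convention2} together with $h_{1/2}=h_1/2$, $h_{N+1/2}=h_N/2$, i.e. $h_0=h_{N+1}=0$) correctly produces the terms $\alpha=0$ and $\alpha=N$ of \eqref{eq:energy_mcc_newt_RHSInterf}, so the energy-inequality part of the proposition is fully and correctly established, in the same spirit as the paper but with the algebra the paper leaves implicit. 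The one piece of the statement your proposal does not address is the qualifier ``consistent'' attached to \eqref{eq:sigma_xx_NewtonInter}--\eqref{eq:sigma_zx_NewtonInter}: the paper's proof devotes most of its length to checking that these are consistent approximations of $\Sigma_{xx}$ and $\Sigma_{zx}$ evaluated on $z=z_{\alpha+1/2}(x,t)$, by relating $\partial_x u$ and $\partial_x w$ restricted to the interface to the $x$-derivatives of the interface traces (chain rule) and then invoking the incompressibility condition, which is exactly what generates the factor $1-\bigl(\frac{\partial z_{\alpha+1/2}}{\partial x}\bigr)^2$ multiplying the velocity jump; your closing remark that the resulting dissipation mimics \eqref{eq:energy_newt_RHS} concerns the output of the computation and does not substitute for that consistency check of the interface stresses themselves. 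Apart from this omission, which is easy to repair along the paper's lines, your route coincides with the paper's.
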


\begin{proof}
The expression \eqref{eq:energy_mcc_newt_RHSInterf} clearly mimics the continuous one given by \eqref{eq:energy_newt_RHS}.
Moreover it is possible to exhibit a kind of consistency of the definitions \eqref{eq:energy_mcc_newt_RHSInterf}-\eqref{eq:sigma_xx_NewtonInter}.
Indeed if we express the derivatives of the newtonian stress terms along the interface ${\alpha+1/2}$, on one hand, we have 
\begin{multline*}
 {\Sigma_{xx}}_{|z=z_{{\alpha+1/2}}(x,t)}  = 2\mu \ \partial_x u(x,z,t)_{|z = z_{{\alpha+1/2}}(x,t)} \\ = 
  2\mu \left( \frac {\partial u(x,z_{{\alpha+1/2}}(x,t),t) } {\partial x} - 
  \frac{\partial z_{{\alpha+1/2}}(x,t)}{\partial x} \partial_z u(x,z,t)_{|z = z_{{\alpha+1/2}}(x,t)} \right),
 \end{multline*}
which is consistent with \eqref{eq:sigma_xx_NewtonInter}. And, on the other hand,  
we have, $${\Sigma_{zx}}_{|z=z_{{\alpha+1/2}}(x,t)}=\mu \left( \partial_z u(x,z,t)_{|z=z_{{\alpha+1/2}}(x,t)} + 
\partial_x w(x,z,t)_{|z=z_{{\alpha+1/2}}(x,t)} \right).$$ 
Additionally, we can write
$$
 \partial_x w(x,z,t)_{|z = z_{{\alpha+1/2}}(x,t)}  = \frac {\partial w(x,z_{{\alpha+1/2}}(x,t),t) } {\partial x} - 
 \frac{\partial z_{{\alpha+1/2}}(x,t)}{\partial x} \partial_z w(x,z,t)_{|z = z_{{\alpha+1/2}}(x,t)} \ ,
$$
and, using the incompressibility condition, we get,
$$\partial_z w(x,z,t)_{|z = z_{{\alpha+1/2}}(x,t)}
= -\partial_x u(x,z,t)_{|z = z_{{\alpha+1/2}}(x,t)} \ . $$ 
Therefore we have,
\begin{multline*}
 \partial_x w(x,z,t)_{|z = z_{{\alpha+1/2}}(x,t)}  = \frac {\partial w(x,z_{{\alpha+1/2}}(x,t),t) } {\partial x} + \\
 \frac{\partial z_{{\alpha+1/2}}(x,t)}{\partial x} \left(\frac {\partial u(x,z_{{\alpha+1/2}}(x,t),t) } {\partial x} 
 - \frac{\partial z_{{\alpha+1/2}}(x,t)}{\partial x} \partial_z u(x,z,t)_{|z = z_{{\alpha+1/2}}(x,t)} \right).
 \end{multline*}
Finally, this leads to the following expression
\begin{multline*}
 {\Sigma_{zx}}_{|z=z_{{\alpha+1/2}}(x,t)}=\mu \Bigl(  \frac {\partial w(x,z_{{\alpha+1/2}}(x,t),t) } {\partial x} +  
 \frac{\partial z_{{\alpha+1/2}}(x,t)}{\partial x} \frac {\partial u(x,z_{{\alpha+1/2}}(x,t),t) } {\partial x} + \\
\Bigl( 1 -  \frac{\partial z_{{\alpha+1/2}}(x,t)}{\partial x}^2 \Bigr) \partial_z u(x,z,t)_{|z = z_{{\alpha+1/2}}(x,t)}  \Bigr),
\end{multline*}
which is consistent with \eqref{eq:sigma_zx_NewtonInter}.

The energy inequality is obtain by injecting \eqref{eq:sigma_xx_NewtonInter}, \eqref{eq:sigma_zx_NewtonInter} and
\eqref{eq:sigma_ab_layer_moy} in \eqref{eq:energy_mcc_newt_RHSDvp}.
\end{proof}

\begin{remark}
We can remark in the lemma \eqref{lem:mcc_newt} that the rheology terms are both at the interface and in the layers. 
Thus an other strategy could be to defined them at the layer, and to average the terms at the interface. 
In this case, we have
\begin{eqnarray}
h_\alpha \Sigma_{xx,\alpha} & = &  - h_\alpha \Sigma_{zz,\alpha} \nonumber \\
& = & 2\mu  ~ \left( ~ h_\alpha  \frac{\partial u_\alpha}{\partial x} - 
  \Bigl(\frac{\partial z_{\alpha+1/2}}{\partial x}\frac {u_{\alpha+1} - u_\alpha}{2} +
 \frac{\partial z_{\alpha-1/2}}{\partial x} \frac {u_{\alpha} - u_{\alpha-1}}{2} \Bigr) ~ \right)
 \label{eq:sigma_xx_NewtonLayer}
\end{eqnarray}
\begin{eqnarray}
 h_\alpha \Sigma_{zx,\alpha} & =  & h_\alpha \Sigma_{xz,\alpha} \nonumber  \\
 & = &\mu ~ \Biggl( ~ h_\alpha
\frac{\partial w_\alpha}{\partial x} + h_\alpha \frac{\partial z_\alpha}{\partial x}\frac{\partial
    u_\alpha}{\partial x} + \frac{u_{\alpha+1} - u_{\alpha}}{2}  
    \left(   1 - \Bigl( \frac{\partial z_{\alpha+1/2}}{\partial x}\Bigr)^2 \right)   \nonumber  \\
 & & \quad \quad +  \frac{u_{\alpha} - u_{\alpha-1}}{2}  
    \left(   1 - \Bigl( \frac{\partial z_{\alpha-1/2}}{\partial x}\Bigr)^2 \right)  ~ \Biggr) ,
\label{eq:sigma_zx_NewtonLayer}
\end{eqnarray}
which are also consistent expressions of the tensor, and the following averaging is introduced,
\begin{equation}
\Sigma_{ab,\alpha+1/2} = \dfrac{\Sigma_{ab,{\alpha+1}} + \Sigma_{ab,{\alpha}} }{2},
\label{eq:sigma_ab_inter_moy}
\end{equation}
 and leads to
an energy inequality, since the right hand side of the discrete energy balance
of the layer-averaged model, defined by \eqref{eq:energy_mcc_newt_RHSDvp}, leads here to
\begin{equation}
R_E = - \sum_{\alpha=1}^N \dfrac{h_{\alpha}}{\mu} \biggl(  \Sigma_{xx,\alpha}^2 + \Sigma_{zx,\alpha}^2 \biggr) 
- \frac{\kappa}{c_b^3} u_1^2 \ .
\label{eq:energy_mcc_newt_RHSLayer}
\end{equation}
This strategy seems to be more natural since, in the spirit of the layer-averaged model,
the unknowns are mainly localised in the layers. However the main drawback is the stencil 
of the interface rheology terms which are not compact. For instance, the term
$\Sigma_{xx,\alpha+1/2}$ will be expressed in function of $u_{\alpha+2}, u_{\alpha+1}$ and $u_{\alpha-1}$.
\end{remark}

\subsection{An extended Saint-Venant system}

In the simplified case of a single layer, the model given in
prop.~\ref{prop:mc_NS} corresponds to the classical Saint-Venant system but
completed with rheology terms.
\begin{proposition}
The classical Saint-Venant corresponds to the single-layer version of
the layer-averaged Navier-Stokes system. With obvious notations,
it is given by
\begin{eqnarray*}
& & \frac{\partial H}{\partial t} +
\frac{\partial}{\partial x} (H \overline{u}) = 0,\label{eq:SV_1}\\
& & \frac{\partial (H\overline{u})}{\partial t} +
\frac{\partial}{\partial x}  \left( H \overline{u}^2 + \frac{g}{2}H^2\right)
= -gH\frac{\partial z_b}{\partial x} \nonumber\\
& & + \frac{\partial}{\partial x}\left(
  H\overline{\Sigma}_{xx} - H\overline{\Sigma}_{zz} +
  \frac{\partial}{\partial x} \Bigl(\frac{(H+z_b)^2 - z_{b}^2}{2} \overline{\Sigma}_{zx} \Bigr)\right)  - z_{b}
\frac{\partial^2}{\partial x^2} (H \overline{\Sigma}_{zx}) - \frac{\kappa}{c_b^3} \overline{u},\label{eq:SV_2}\\
& & \overline{w} = -\frac{1}{2}\frac{\partial (H\overline{u})}{\partial
  x}  + \overline{u} \frac{\partial}{\partial x} \left( \frac{H+2z_b}{2}\right).\label{eq:SV_3}
\end{eqnarray*}
For smooth solutions, we obtain the balance
\begin{multline*}
\frac{\partial E }{\partial t} +  \frac{\partial}{\partial x}
\Biggl(\overline{u}\biggl( E +
    \frac{g}{2} H^2 - H\bigl(\overline{\Sigma}_{xx} -
    \overline{\Sigma}_{zz}\bigr) 
 - \frac{\partial}{\partial
      x}\Bigl( \frac{\partial (H+2z_b)}{\partial x}\overline{\Sigma}_{xz} + 
      \frac{H}{2} \frac{\partial}{\partial x} (H\overline{\Sigma}_{xz}) \Bigr) \biggr) \\
 - H\overline{w}
  \overline{\Sigma}_{zx} \Biggr) = - H\frac{\partial
    \overline{u}}{\partial x} \left( \overline{\Sigma}_{xx} -
  \overline{\Sigma}_{zz} \right) 
 - H \Bigl(\frac{\partial
  \overline{w}}{\partial x} + \frac{1}{2}\frac{\partial (H+2z_b)}{\partial
    x}\frac{\partial \overline{u}}{\partial x}\Bigr)\overline{\Sigma}_{zx} - \frac{\kappa}{c_b^3} \overline{u}^2,
\end{multline*}
with $E =\frac{ H \overline{u}^2}{2}+\frac{g}{2}
  \Bigl( (H+z_b)^2 - z_{b}^2\Bigr)$.
\label{prop:mc_SV}
\end{proposition}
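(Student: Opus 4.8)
The plan is to obtain prop.~\ref{prop:mc_SV} simply as the single-layer case $N=1$ of prop.~\ref{prop:mc_NS}: nothing genuinely new is proved, one only specializes the general layer-averaged Navier--Stokes system and its energy balance and checks which terms survive. First I would fix the notation for $N=1$: $h_1=H$, $u_1=\overline{u}$, $w_1=\overline{w}$, with $z_{1/2}=z_b$, $z_{3/2}=\eta=z_b+H$ and, by~\eqref{eq:zalpha}, $z_1=(z_{1/2}+z_{3/2})/2=(H+2z_b)/2$; in particular the elementary identities $h_1z_1=\frac{(H+z_b)^2-z_b^2}{2}$ and $\frac{\partial z_1}{\partial x}=\frac12\frac{\partial(H+2z_b)}{\partial x}$ hold. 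Next I would record that~\eqref{eq:Qlim} gives $G_{1/2}=G_{3/2}=0$, so every mass-exchange contribution $u_{\alpha\pm1/2}G_{\alpha\pm1/2}$ drops out, and that the convention~\eqref{eq:convention2} forces $u_0=u_2=\overline{u}$, so every difference $u_{\alpha+1}-u_\alpha$ vanishes as well.

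With these reductions the continuity, momentum and vertical-velocity equations of prop.~\ref{prop:mc_SV} follow term by term from~\eqref{eq:NS_avz_111}--\eqref{eq:NS_avz_444}. Equation~\eqref{eq:NS_avz_111} is immediate. In~\eqref{eq:NS_avz_222} the upper sums $\sum_{j=2}^{1}h_j\Sigma_{zx,j}$ are empty, hence zero, while $\sum_{j=1}^{1}h_j\Sigma_{zx,j}=H\overline{\Sigma}_{zx}$; the term $\frac{\partial}{\partial x}(h_1z_1\Sigma_{zx,1})$ becomes $\frac{\partial}{\partial x}\bigl(\tfrac{(H+z_b)^2-z_b^2}{2}\overline{\Sigma}_{zx}\bigr)$; and the interface stresses reduce to $\sigma_{3/2}-\sigma_{1/2}$, where the free-surface condition gives $\sigma_{3/2}=0$ and~\eqref{eq:coulom_layer1} together with $c_{1/2}=c_b$ gives $\sigma_{1/2}=\frac{\kappa}{c_b^3}\overline{u}$, producing the bottom-friction term $-\frac{\kappa}{c_b^3}\overline{u}$. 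Equation~\eqref{eq:NS_avz_444} with $\alpha=1$ has the empty sum $\sum_{j=1}^{0}$, so $\overline{w}=-\tfrac12\frac{\partial(H\overline{u})}{\partial x}+\overline{u}\frac{\partial z_1}{\partial x}$, which is the stated form after inserting $z_1=(H+2z_b)/2$.

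For the energy balance I would specialize~\eqref{eq:energy_mcc} to $N=1$, $\alpha=1$: then $E_1=\frac{H\overline{u}^2}{2}+\frac g2\bigl((H+z_b)^2-z_b^2\bigr)$, all $G$-terms vanish, all $(u_{\alpha+1}-u_\alpha)$-terms (in particular $\sigma_{3/2}(u_2-u_1)$) vanish by the convention~\eqref{eq:convention2} and by $\sigma_{3/2}=0$, and the empty sums $\sum_{j=\alpha+1}^{N}h_j\Sigma_{zx,j}$ disappear from both the flux and the right-hand side. The surviving flux and dissipation terms then reduce, after inserting $z_1=(H+2z_b)/2$ and using $\frac{\partial z_1}{\partial x}=\tfrac12\frac{\partial(H+2z_b)}{\partial x}$, to exactly those displayed in the proposition, the outer summation over $\alpha$ in~\eqref{eq:energy_mcc} being trivial here; I would carry out the short integrations by parts on the $\overline{\Sigma}_{zx}$-terms needed to match the flux in the asserted form.

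Since prop.~\ref{prop:mc_SV} is thus a bookkeeping corollary of prop.~\ref{prop:mc_NS}, there is no real obstacle; the only steps that require attention are (i) correctly treating the empty index ranges (several sums run from $2$ to $1$ or from $1$ to $0$ and must be read as zero), (ii) keeping the factor $c_{1/2}=c_b$ and the correct sign when passing from $\sigma_{1/2}$ in~\eqref{eq:coulom_layer1} to the friction term $-\frac{\kappa}{c_b^3}\overline{u}$, and (iii) the elementary geometric identities $h_1z_1=\frac{(H+z_b)^2-z_b^2}{2}$ and $\frac{\partial z_1}{\partial x}=\tfrac12\frac{\partial(H+2z_b)}{\partial x}$ used to put the result into the displayed form.
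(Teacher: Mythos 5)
Your proposal is correct and follows essentially the same route as the paper, which states prop.~\ref{prop:mc_SV} without a separate proof precisely because it is the direct $N=1$ specialization of prop.~\ref{prop:mc_NS} (empty sums, $G_{1/2}=G_{3/2}=0$, $u_0=u_1=u_2$, $\sigma_{3/2}=0$, $\sigma_{1/2}=\frac{\kappa}{c_b^3}\overline{u}$, $z_1=\frac{H+2z_b}{2}$), exactly as you carry out. The only caveat is in your last step on the energy flux: the literal $N=1$ reduction of \eqref{eq:energy_mcc} gives the term $-\overline{u}\bigl(\frac{H}{2}\frac{\partial (H+2z_b)}{\partial x}\overline{\Sigma}_{zx}+\frac{H}{2}\frac{\partial}{\partial x}(H\overline{\Sigma}_{zx})\bigr)$, which does not coincide with the printed form $-\overline{u}\,\frac{\partial}{\partial x}\bigl(\frac{\partial (H+2z_b)}{\partial x}\overline{\Sigma}_{xz}+\frac{H}{2}\frac{\partial}{\partial x}(H\overline{\Sigma}_{xz})\bigr)$ under any integration by parts, so this mismatch should be attributed to a typographical slip in the proposition's statement rather than repaired by extra manipulations in your proof.
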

In the particular case of a newtonian fluid, the Saint-Venant system
given in prop.~\ref{prop:mc_SV} reduces to
\begin{eqnarray}
& & \frac{\partial H}{\partial t} +
\frac{\partial}{\partial x} (H \overline{u}) = 0,\label{eq:SV_11}\\
& & \frac{\partial (H\overline{u})}{\partial t} +
\frac{\partial}{\partial x}  \left( H \overline{u}^2 + \frac{g}{2}H^2\right)
= -gH\frac{\partial z_b}{\partial x} \nonumber\\
& & \qquad + \frac{\partial}{\partial x}\left(
  4\mu H\frac{\partial \overline{u}}{\partial x} +
  \frac{\partial}{\partial x} \mu\Bigl(\frac{(H+z_b)^2 - z_{b}^2}{2} 
  \frac{\partial \overline{w}}{\partial x} \Bigr)\right) - z_{b} \mu
\frac{\partial^2}{\partial x^2} \Bigl( H \frac{\partial \overline{w}}{\partial x} \Bigr)  
- \frac{\kappa}{c_b^3}\overline{u},\label{eq:SV_22}\\
& & \overline{w} = -\frac{1}{2}\frac{\partial (H\overline{u})}{\partial
  x}  + \overline{u} \frac{\partial}{\partial x} \left( \frac{H+2z_b}{2}\right).\label{eq:SV_33}
\end{eqnarray}
For smooth solutions, we obtain the energy balance
\begin{multline}
\frac{\partial E }{\partial t} +  \frac{\partial}{\partial x}
\Biggl(\overline{u}\biggl( E +
    \frac{g}{2} H^2 - 4\mu H\frac{\partial \overline{u}}{\partial x}\bigr)  
    - \frac{\partial}{\partial       x}\Bigl( \mu\bigl(\frac{\partial (H+2z_b)}{\partial x}
    \frac{\partial \overline{w}}{\partial x} + \frac{H}{2} \frac{\partial}{\partial x} 
    (H\frac{\partial \overline{w}}{\partial x}) \bigr)\Bigr) \biggr) \\
  - \mu\frac{H}{2}\frac{\partial \overline{w}^2}{\partial x}\Biggr) = -
 4\mu H\left(\frac{\partial
    \overline{u}}{\partial x}\right)^2 
- \mu H \left(\frac{\partial
  \overline{w}}{\partial x} + \frac{1}{2}\frac{\partial (H+2z_b)}{\partial
    x}\frac{\partial \overline{u}}{\partial x}\right)^2 - \frac{\kappa}{c_b^3} \overline{u}^2.
\label{eq:eSV11}
\end{multline}
\begin{remark}
Notice that, compared to the classical viscous Saint-Venant system~\cite{gerbeau}, the model~\eqref{eq:SV_11}-\eqref{eq:eSV11} has
complementary terms.
\end{remark}

\section{Conclusion}

We have proposed a layer-averaged discretization for the approximation
of the incompressible free surface Euler and Navier-Stokes
equations. The obtained models do not rely on any asymptotic expansion
but on a criterion of minimal kinetic energy. Notice also that the
layer averaging for the Navier-Stokes system has been carried out for
a fluid with a general rheology.

Since these models are formulated over a fixed domain, it is
possible to derive efficient numerical techniques for their
approximation. For the approximation of the proposed models, a finite volume strategy~-- relying on a kinetic
interpretation and satisfying stability properties such as a fully
discrete entropy inequality~-- will be published in a forthcoming paper.

\section{Acknowledgement}

The work presented in this paper was supported in part by the Inria Project Lab ``Algae in Silico'' 
and the CNRS-INSU, TelluS-INSMI-MI program, project CORSURF.
It was realised during the secondment of the third author in the Ange Inria team.

\bibliographystyle{amsplain}
\bibliography{boussinesq}

\end{document}